\documentclass[11pt]{article}
\usepackage{amsfonts,amsmath,amssymb,amsthm}
\usepackage{hyperref}
\usepackage{graphicx}
\usepackage{color}
\usepackage[affil-it]{authblk}
\usepackage{bbm}


\newcommand{\detail}[1]{\par\noi{\bf [Proof detail\ }{#1}
\hfill{\bf ]}\par\noi\hspace{-4pt}}
\renewcommand{\detail}[1]{}







\newcommand{\noi}{\noindent}

\newtheorem{theorem}{Theorem}[section]
\newtheorem{proposition}[theorem]{Proposition}
\newtheorem{corollary}[theorem]{Corollary}
\newtheorem{conjecture}[theorem]{Conjecture}
\newtheorem{lemma}[theorem]{Lemma}

\newtheorem{remark}[theorem]{Remark}

\newcommand{\et}{\end{theorem}}
\newcommand{\bl}{\begin{lemma}}
\newcommand{\el}{\end{lemma}}
\newcommand{\bp}{\begin{proposition}}
\newcommand{\ep}{\end{proposition}}
\newcommand{\bcor}{\begin{corollary}}
\newcommand{\ecor}{\end{corollary}}
\newcommand{\br}{\begin{remark}\rm}
\newcommand{\er}{\end{remark}}
\newcommand{\bcon}{\begin{conjecture}}
\newcommand{\econ}{\end{conjecture}}

\newcommand{\be}{\begin{equation}}
\newcommand{\ee}{\end{equation}}
\newcommand{\ba}{\begin{array}}
\newcommand{\ea}{\end{array}}
\newcommand{\bc}{\be\begin{array}{r@{\,}c@{\,}l}}
\newcommand{\ec}{\end{array}\ee}

\newcommand\cF{{\cal F}}


\newcommand{\eps}{\varepsilon}



\newcommand{\cA}{{\cal A}}
\newcommand{\Bi}{{\cal B}}
\newcommand{\Ci}{{\cal C}}

\newcommand{\cL}{{\cal L}}
\newcommand{\Mi}{{\cal M}}

\newcommand{\cY}{{\cal Y}}

\newcommand{\R}{{\mathbb R}}
\newcommand{\N}{{\mathbb N}}
\newcommand{\Z}{{\mathbb Z}}

\renewcommand{\P}{{\mathbb P}}
\newcommand{\E}{{\mathbb E}}







\setlength{\topmargin}{.7cm}
\setlength{\headheight}{0cm}
\setlength{\headsep}{0cm}
\addtolength{\textheight}{3.5cm}
\addtolength{\textwidth}{3cm}
\addtolength{\oddsidemargin}{-1cm}
\addtolength{\oddsidemargin}{-1cm}


\newcommand{\ben}{\begin{enumerate}}
\newcommand{\een}{\end{enumerate}}

\newcommand{\beqn}{\begin{eqnarray}}
\newcommand{\eeqn}{\end{eqnarray}}
\newcommand{\beqnn}{\begin{eqnarray*}}
\newcommand{\eeqnn}{\end{eqnarray*}}

\newcommand\bx{{\vec x}}

\title{Recovering the Brownian Coalescent Point Process from the Kingman Coalescent by Conditional Sampling }

\author{Amaury Lambert, Emmanuel Schertzer}
\begin{document}

\maketitle

\begin{abstract}
We consider a continuous population whose dynamics is described by the standard stationary Fleming-Viot process, so that the genealogy of $n$ uniformly sampled individuals is distributed as the Kingman $n$-coalescent. In this note, we study some genealogical properties of this population when the sample is conditioned to fall entirely into a subpopulation with most recent common ancestor (MRCA) shorter than $\eps$.
First, using the comb representation of the total genealogy \cite{LUB16}, we show that the genealogy of the descendance of the MRCA of the sample on the timescale $\eps$ converges as $\eps\to 0$. The limit is the so-called Brownian coalescent point process (CPP) stopped at an independent Gamma random variable with parameter $n$, which can be seen as the genealogy at a large time of the total population of a rescaled critical birth-death process, biased by the $n$-th power of its size. Secondly, we show that in this limit the coalescence times of the $n$ sampled individuals are i.i.d. uniform random variables in $(0,1)$.
These results provide a coupling  between two standard models for the genealogy of a random exchangeable population: the Kingman coalescent and the Brownian CPP.

\end{abstract}

\section{Introduction}
In this paper, we seek to couple two well-known probabilistic objects both modeling the genealogy of an exchangeable population. The first object is the celebrated \emph{Kingman $n$-coalescent} \cite{K82}, which arises as the genealogy of a sample of $n$ co-existing individuals within a stationary population of large size $N$ constant through time, when time is measured in units of $N$ generations. The second object is the \emph{coalescent point process} introduced in \cite{P04}, which arises as the genealogy of the whole population at time $N$ of a critical birth--death process starting from size $N$ and with birth and death rates both equal to $1$, when time is also accelerated by $N$.

In the coalescent point process, the population is assumed to be endowed with a linear order consistent with the genealogy, in the sense that in a plane representation of this genealogy, lineages only intersect at internal nodes (common ancestors) --see Fig. \ref{cannings} or Fig. 7 in \cite{L08}. This order can also be obtained as the order inherited from a contour of the tree \cite{P04, L10, LP13}. The linear arrangement of coalescence times (that is, times to the MRCA --most recent common ancestor) between consecutive pairs of individuals ranked in the linear order converges to the concatenation of (a Poisson number with parameter 1 of) i.i.d. Poisson point processes with intensity measure $2dt\, x^{-2}dx$ killed at their first atom with second coordinate larger than 1. Each of these killed Poisson point processes encodes the genealogy of the descendance of an individual in a critical branching process conditioned on survival up to a large time. They will hereafter be called killed Brownian coalescent point process (killed Brownian CPP).\\

Both the standard Kingman coalescent and the killed Brownian CPP code for the genealogy of a large exchangeable population, but there are two features distinguishing them. First, the Kingman coalescent focuses on sparsely sampled individuals whereas the CPP deals with the whole population. Second, the Kingman coalescent is based on the assumption of a stationary population with constant size (total size constraint), whereas in the CPP the size of the population is only constant in expectation, and its foundation time is fixed (time constraint). 

Rather than defining a third object coupling the Kingman coalescent and the CPP, our aim is to show that one of the two is embedded in the other. Due to the first aforementioned distinctive feature (i.e., sparse sampling), one might think at first sight that the Kingman coalescent can be obtained by sparsely sampling the CPP. But in doing this, one would not get rid of the second distinctive feature, namely the time constraint.
The alternative view is the right one. In an exchangeable population with large constant size, the descendance of a small subpopulation is blind to the total size constraint and it is constant in expectation (see for example Theorem 1 in \cite{BL06}). Our goal is to prove a backward-in-time version of the last informal statement (`in a large stationary population with constant size, the genealogy of a subpopulation with recent MRCA is given by a CPP') and to derive some consequences of this fact.\\  


We start from the genealogy of a population with constant size in the stationary case directly with the continuous limiting object, the standard Fleming-Viot process \cite{FV79}. Actually, we will make use of an alternative description of the Fleming-Viot process, namely the flow of bridges introduced by Bertoin and Le Gall \cite{BL03}, in which the population is endowed with a linear order consistent with the genealogy (See Section \ref{sec:flow} and Fig. \ref{cannings}). We will call \emph{Kingman comb} the list of coalescence times of pairs of `consecutive individuals' in this linearly arranged continuous population, as defined in Section \ref{sec:comb}. In Section \ref{sec:comb},  we show that the properly rescaled Kingman comb converges to the Brownian CPP  (see Proposition \ref{cvc2}).

In the remainder of the paper, we investigate the genealogy of $n$ individuals sampled in such a way that their MRCA lies at a depth smaller than $\eps$ with $\eps\to0$. Note that this conditioning can be implemented in two distinct ways:
\begin{enumerate}
\item[(i)]({\it Quenched conditional sampling}) Conditional on the flow of bridges, sample $n$ individuals such that their MRCA lies at a depth smaller than $\eps$ and then average over every realization of the flow. In biology, such conditioning could arise by sampling on purpose individuals that share close phenotypic characteristics or dwell in neighboring habitats.
\item[(ii)]({\it Averaged conditional sampling}) Directly condition the $n$-Kingman coalescent to have its MRCA lie at a depth smaller than $\eps$. In contrast with (i), where the conditioning is solely enforced at the sampling level, 
the conditioning in (ii) could be due either to anomalous sampling (as in (i)) or to an abnormally shallow MRCA of the {\it entire} population. 
\end{enumerate} 
In Section \ref{c-s},  we focus on case (i), where we consider the entire family that shares a common ancestor with the $n$ sampled individuals. We show that for small $\eps$, the genealogy of this family is given by a CPP killed at an independent Gamma random variable (see Theorem \ref{cvc}). Informally, this amounts to saying that the genealogy of the family of the sample is the rescaled genealogy of a {\it $n$-size-biased} critical birth-death process (i.e., biased by the $n$-th power of its size) conditioned on survival up to a large time (see Remark \ref{rem-cr}). 

In Section \ref{c-s-2}, we use this result to prove that the genealogy of the $n$ conditionally sampled individuals enjoy a nice i.i.d. structure, namely that their (properly rescaled) $n-1$ coalescence times are i.i.d. uniform random variables (Theorem \ref{teo1}). In Section \ref{c-s-3}, we turn to case (ii) where we show (Theorem \ref{petit-calcul}) that the genealogy of the $n$ individuals is also described (asymptotically)
in terms of i.i.d. uniform random variables, thus showing that the conditionings (i) and (ii) become indistinguishable as $\eps$ goes $0$. 

Finally, we briefly mention a natural conjecture arising from the previous results.  Because (a) the genealogy of the $n$ individuals in (i) and (ii) coincide asymptotically as $\eps\to 0$, and because (b) the entire family sharing a common ancestor with the $n$ individuals in case (i) is described in terms of a `$n$-size-biased'  killed Brownian CPP (see again Theorem \ref{cvc}), it is natural to conjecture that Theorem \ref{cvc} also holds in case (ii).

\section{Preliminaries: Flows of Bridges and Combs}
\label{sec:flow}

\subsection{Discrete Bridges} 

Flows of bridges have been introduced by Bertoin \& Le Gall in \cite{BL03}. In order to motivate their construction, 
let us consider a general discrete time Cannings \cite{C75} model as follows.
\begin{itemize}
\item[(1)] At each generation the size of the population
is fixed and equal to $N$;
\item[(2)] Individuals at generation $r$ are labelled from $1$ to $N$ and we denote by  $(\nu^1_r,\ldots,\nu^N_r)$
the vector of offspring numbers;
\item[(3)] This labeling of individuals is consistent with the genealogy (cf. Introduction and Fig.~\ref{cannings});
\item[(4)] The vectors $((\nu^1_r,\cdots,\nu^N_r); r\in \Z)$ are i.i.d. exchangeable vectors.
\end{itemize}
Recall from the Introduction that a labeling consistent with the genealogy is such that lines of descent only cross at internal nodes (See Fig. \ref{cannings}). Rigorously, this amounts to enforcing the condition that if $i<j$, the label of an offspring of individual $i$ is always smaller
than the label of an offspring of individual $j$.

Now for any $x \in \ \{0,\cdots,N\}$ and $r\in\Z$, define $B_{r,r+1}(x)$ as the number of individuals at generation $r+1$ descending from the subpopulation with labels smaller than or equal to $x$ at generation $r$. Thanks to Assumption (2), we have
$$
B_{r,r+1}(x) \ :=  \ \sum_{k\leq x }  \ \nu_{r}^k.
$$
Thanks to Assumptions (1) and (4), the maps $(B_{r,r+1};r\in\Z)$ are i.i.d. and each $B_{r,r+1}$ is a \emph{discrete bridge}, that is a non-decreasing function from $ \{0,\ldots,N\}$ onto itself with exchangeable increments. For any $m<n$, define 
$$
B_{m,n}  \ := \  B_{n-1,n} \circ \cdots \circ \ B_{m,m+1}.
$$
Thanks to Assumption (3), $B_{m,n}(x)$ is the number of individuals at generation $n$ descending from the subpopulation with labels smaller than $x$ at generation $m$. The bridge property is stable under composition and furthermore,
$\{B_{m,n}\}_{m<n}$ satisfies the so-called cocycle property
$$
B_{k,n} =  B_{m,n} \circ B_{k,m} \qquad k < m <n.
$$
The collection $\{B_{m,n}\}_{m<n}$ is called a discrete \emph{flow} of discrete bridges. Clearly, the increments of the flow are stationary and independent.\\
\\
Let us define the  inverse flow $\phi_{m,n}$ by
\beqn
\phi_{m,n}(x) = \inf\{ y \in \{1,\ldots, N\}:  B_{m,n}(y) \geq x \} \qquad m<n,  \ x\in\{1,\ldots,N\}. \label{inv} 
\eeqn
The  bridge property implies that $\{\phi_{m,n}\}_{m<n}$ defines a backward coalescing flow, in the sense that $\phi_{k,n} = \phi_{k,m} \circ \phi_{m,n}$ for $k < m <n$ and the orbits $\{\phi_{m,n}(x)\}_{m\leq n}$ coalesce upon meeting each other as $m$ decreases. More specifically, if $\phi_{m,n}(x) =\phi_{m,k}(x')$, then $\phi_{m',n}(x) =\phi_{m',k}(x')$ for all $m'<m$. For every $x\in\{1,\cdots,N\}$, we can then see the orbit 
\beqnn
\N & \rightarrow & \{1,\ldots,N\} \\
k                   & \mapsto & \phi_{n-k,n}(x)
\eeqnn
as the \emph{ancestral lineage} of individual $x$ of generation $n$. (See Fig. \ref{cannings}.)

\begin{figure}[h]
\centering
\includegraphics[scale=.3]{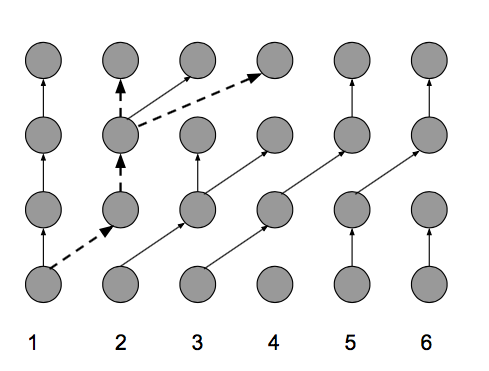}
\caption{A realization of the Cannings model at times $t=0,1,2,3$ flowing upwards. Dashed lines indicate 
the ancestral lineages of individuals $2$ and $4$ at generation $3$.
At each generation, individuals are labelled from 1 to 6 so that
ancestral lines do not cross.}
\label{cannings}
\end{figure}

\subsection{The Standard Fleming-Viot Flow of Bridges} 

In a similar way to the discrete flow of bridges, we now define the continuous flow of bridges as done in \cite{BL03}. Now a \emph{bridge} is a non-decreasing function $B$ from $[0,1]$ onto itself with exchangeable increments such that $B(0)=0$ and $B(1) =1$. A stochastic \emph{flow of bridges} is a family $\{B_{s,t}\}_{-\infty<s<t<\infty}$ of bridges satisfying the following properties.
\begin{itemize}
\item[(1)]
 Co-cycle property. For any fixed $r<s<t$, $B_{r,t} =  B_{s,t} \circ B_{r,s}$ a.s.;
 \item[(2)] Independent and stationary increments; 
 \item[(3)] No fixed time discontinuity. For any fixed time $s$, $\lim_{t\downarrow s} B_{s,t} = \mbox{Id}$ (uniformly) in probability. 
\end{itemize}
We think of a stochastic flow as the dynamics of a stationary, continuous population with constant size equal to $1$. The genealogy of the population alive at time $t$ is encoded by the backward coalescing flow $\{\phi_{s,t}\}_{s<t}$ defined analogously to \eqref{inv} by  
\beqn
\phi_{s,t}(x) = \inf\{ y \in [0,1]:  B_{s,t}(y) > x \} \qquad s<t,  \ x\in[0,1]. \label{invcont} 
\eeqn
Since the increments of a flow are stationary and independent, a flow is uniquely characterized by its one-dimensional marginal $B_{0,t}$. 
In this paper, we will specifically consider the so-called \emph{standard Fleming-Viot} (FV) flow of bridges whose one-dimensional marginal  is equal to
$$
B_{0,t}(x) =
\sum_{i=1}^{N_t} \mathbbm{1}_{[0,x]}(U_i)\  \beta_i
$$
where 
\begin{itemize}
\item[(1)] $N_t$ is distributed as the value at time $t$ of a pure-death process going from $k$ to $k-1$ at rate $k(k-1)/2$ and started at $\infty$; 
\item[(2)] Conditional on $N_t=n$, the random vector  $\{\beta_i\}_{i=1}^{n}$ is independent of the $(U_i)$ and follows the Dirichlet distribution
with parameter $(1,\ldots,1)$;
\item[(3)] $\{U_i\}_{i=1}^{\infty}$ is a sequence of i.i.d. uniform random variables independent of $(N_t; \{\beta_i\}_{i=1}^{N_t})$. 
\end{itemize}
In the same spirit as \cite{MS01}, this flow should arise as the scaling limit of discrete bridges induced by any Cannings model with enough control on the tail of the offspring distribution (in particular the Wright-Fisher model).

\subsection{Combs and Coalescent Point Processes}

In the next section, we will be interested in the backward coalescing flow associated with the FV flow of bridges. The precise trajectory of the ancestral lineage of a given individual  -- i.e., the successive labels in (0,1) of the ancestors of this individual -- is in most applications irrelevant. In contrast, one would like to extract from the coalescing flow the pure genealogical information contained in this object. 

To do this we follow \cite{LUB16}  and define a \emph{comb} as a function $f:[0,+\infty)\to [0,+\infty)$, such that for any $\eps>0$, $\{f\ge \eps\}$ is finite. Then $d_f(x,y):= \sup_{z\in(x\wedge y, x\vee y)} f(z)$ defines an ultrametric distance on $\{f=0\}$ called the comb metric (modulo the identification of points at distance 0 if $f$ is zero on one or several open intervals). Let $\Omega$ be the space of combs and consider the mapping
\begin{eqnarray*}
F: \Omega & \to & {\cal M}  \\
f             & \mapsto & \sum_{x: f(x)>0} \delta_{x,f(x)}
\end{eqnarray*}
where ${\cal M}$ denotes the space of point measures on $(0,\infty)^2$. We assume that $\cal M$ is endowed with the topology induced by test-functions $\varphi:(0,\infty)^2\to \R$ which are continuous and bounded and for which there is $M>0$ and $\eps>0$ such that $\varphi=0$ outside $(0,M)\times (\eps,\infty)$. We equip $\Omega$ with the $\sigma$-field generated by $F$ when $\cal M$ is equipped with its Borel $\sigma$-field.


Let $(C_n)$ be a sequence of random combs such that for every $A\subseteq (0,\infty)^2$ with zero Lebesgue measure, $F(C_n)(A)=0$ a.s. for every $n$. Let $C$ be a random comb with the same property. We will use repeatedly the fact that $(C_n)$ converges weakly in law to $C$ iff for any $x_1< \cdots< x_k$, the random vectors $(d_{C_n} (x_i,x_{i+1}); 1\le i\le k-1)$ converge in law to the random vector $(d_{C} (x_i,x_{i+1}); 1\le i\le k-1)$. This can be seen thanks to the Portmanteau theorem and thanks to the following equality between events
$$
\{d_{C} (x_i,x_{i+1})<y_i, \forall i\}\ =\ \{F(C)\left(A\right)=0\},
$$
where $A$ is the following subset of $(0,\infty)^2$ 
$$
A=\cup_{i=1}^{k-1}(x_i, x_{i+1}) \times [y_i,\infty),
$$
which is such that $F(C)(\partial A)=0$ a.s.

For any comb $f$ and $t>0$, we will set
$$
k_t(f):=f\mathbbm{1}_{[0,t)} 
$$
and call it the comb $f$ \emph{killed at} $t$.
For any $x>0$, we define 
$$
l_x(f):=\inf\{y\ge 0: f(y) >x\}.
$$
For any $a>0$, we define the scaling operator $S_a$ by
$$
S_a(f)(x):= a^{-1} f\left(a^{-1}x\right).
$$
In particular, $S_a\circ k_t = k_{t/a}\circ S_a$ and $S_a\circ k_{l_x} = k_{l_{x/a}}\circ S_a$.

For any $\sigma$-finite measure $\nu$ on $(0,\infty)$, the \emph{coalescent point process} (CPP) with intensity measure $\nu$ is the random comb $C$ such that $F(C)$ is a Poisson point process with  intensity measure $dt\otimes \nu(dx)$ . 
The coalescent point process associated to 
$$
\nu(dx) \ = \ \frac{2}{x^2} dx,
$$
will hereafter be denoted $\Ci$ and called the \emph{Brownian CPP}. 
We will also define $\bar \Ci:= k_{l_1}(\Ci)$ referred to as the  \emph{killed Brownian CPP}.
As mentioned in the introduction, $\bar \Ci$ can be thought of as the genealogy rescaled by $N$ of the descendance of an individual by a critical branching process conditioned on survival up to a large time $N$.

\begin{remark}
Based for example on \cite{LG05}, it is known that the reflected Brownian motion codes in a certain appropriate sense for a rescaled critical branching forest. For a forest coded by a non-negative function $h$, the coalescence times of the part of the tree lying at distance $d$ from the root, are the depths of the excursions of $h$ away from $d$. This explains why the measure $\nu$ is (up to a multiplicative constant) the It\^o measure of Brownian excursion depths \cite{P04}. 
\end{remark}

\section{The Kingman Comb at Small Scale} 
\label{sec:comb}

\subsection{The Kingman Comb}

Let $\phi$ be the backward coalescing flow defined in \eqref{invcont} from the standard Fleming-Viot flow of bridges. For every $x<y\in(0,1)$, the coalescence time of $x$ and $y$ is given by
$$
\inf\{t>0:  \phi_{-t,0}(x) = \phi_{-t,0}(y)  \}. 
$$
The next statement shows that this genealogical structure can be represented as a comb that we call the Kingman comb (see also \cite{K82, LUB16} for other treatments of the Kingman comb). 

\begin{proposition}
\label{prop:Kingman-comb}
There is a sequence $(V_j)$ of i.i.d. uniform random variables and an independent sequence $(T_j)$ where $T_j=\sum_{k\ge j+1} e_k$, for $e_k$ independent exponential r.v. with parameter $k(k-1)/2$, such that
$$
\inf\{t>0: \phi_{-t,0}(x) = \phi_{-t,0}(y)  \} \ =\ d_C(x,y) \qquad\mbox{for every  }x<y \ \mbox{in $\{C=0\}$   }\mbox{ a.s.}
$$
where
\begin{equation}
\label{eqn:def-comb}
C = \sum_{j\ge 1} T_j \mathbbm{1}_{\{V_j\}}.
\end{equation}
The function $C$ is a comb a.s., and the distance $d_C$ is the comb metric associated to $C$. Thereafter, we will call this random comb the \emph{Kingman comb}.  
\end{proposition}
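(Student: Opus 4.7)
The plan is to construct the sequences $(T_j, V_j)_{j\ge 1}$ directly from the inverse flow $\phi_{-t,0}$, and then separately verify their joint distribution and the identification of $d_C(x,y)$ with the coalescence time. At any $t>0$, the inverse flow partitions $(0,1)$ into $N_t$ intervals, one per ancestor at time $-t$, whose boundary points (in time-$0$ coordinates) are the partial sums of the Dirichlet weights $(\beta_i)_{i=1}^{N_t}$ appearing in the one-dimensional marginal $B_{-t,0}$. I take $T_j$ to be the time at which $N_t$ drops from $j+1$ to $j$, and $V_j$ to be the unique boundary point of the partition at time $T_j^-$ that disappears at the associated merger. The identity $T_j=\sum_{k\ge j+1}e_k$, with $e_k\sim \mathrm{Exp}(k(k-1)/2)$ independent, is then immediate from the pure-death description of $N_t$, so all the work is in analysing the $V_j$.

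The marginal law of $V_j$ follows from a symmetry argument. Conditional on $N_t=n$, the partial sums of $\mathrm{Dirichlet}(1,\ldots,1)$ coincide in distribution with the order statistics of $n-1$ i.i.d.\ uniforms on $(0,1)$, so the $j$ boundaries present at $T_j^-$ are distributed as order statistics of $j$ i.i.d.\ uniforms. Using the independent-increments property of the flow and the exchangeability of the Dirichlet weights given the ancestor positions, one shows that at time $T_j$ the vanishing boundary is uniformly chosen among the $j$ present boundaries; since a uniformly selected order statistic of i.i.d.\ uniforms is itself uniform on $(0,1)$, this gives that $V_j$ is uniform. Independence of the $V_j$ across $j$ and of $(V_j)$ from $(T_j)$ then follow from the Markov and independent-increments structure of the flow.

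For the comb identification, two points $x<y\in\{C=0\}$ share an ancestor at time $-t$ if and only if every boundary of the partition lying in $(x,y)$ has been absorbed into a merger at or before time $t$; consequently the coalescence time equals $\max\{T_j: V_j\in(x,y)\}=\sup_{z\in(x,y)}C(z)=d_C(x,y)$. The verification that $C$ is a genuine comb is then elementary: $\E[T_j]=\sum_{k\ge j+1}2/(k(k-1))=2/j$, hence $T_j\to 0$ a.s., and therefore for every $\eps>0$ the set $\{j: T_j\ge\eps\}$ is a.s.\ finite.

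The main obstacle is justifying rigorously the uniformity of the vanishing boundary at each merger. A clean route is via the discrete Cannings/Moran approximation alluded to at the end of Section 2.2, in which the uniform choice is explicit at the pre-limit level and transfers to the limit through the convergence of flows of bridges. An in-flow argument using only independent increments and the Dirichlet symmetry is also possible but more technically delicate.
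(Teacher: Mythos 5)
Your proposal follows essentially the same outline as the paper's proof: define $T_j$ as the time at which $N_t$ drops from $j+1$ to $j$, define $V_j$ as the boundary point that disappears at that merger (the paper phrases this as the unique element of $R_{J_j}\setminus R_{J_{j-1}}$, where $R_t$ is the range of $B_{-t,0}$), use the Dirichlet$(1,\ldots,1)$ description of the block sizes to control the marginal law of $\{V_1,\ldots,V_{j}\}$, and verify the comb identity by observing that two labels coalesce by time $t$ iff no surviving boundary separates them. Where the two proofs diverge in presentation is the justification of the i.i.d.\ uniform structure of $(V_j)$: the paper derives $N_t$, the exponential $e_k$'s and their independence from the block frequencies by introducing an auxiliary i.i.d.\ sequence $(W_n)$ and citing Bertoin--Le Gall to identify $(\Pi_t)$ with the Kingman coalescent, and then appeals to ``standard arguments'' to pass from the Dirichlet marginals to the full i.i.d.\ structure; you instead name the genuine missing ingredient --- that the boundary erased at each merger is uniform over the surviving boundaries --- and propose to prove it through the Cannings/Moran approximation.

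You are right that the marginal Dirichlet structure alone does \emph{not} determine the joint law of the sequence $(V_j)$: one also needs to know that the conditional law of the ordering given each set $\{V_1,\ldots,V_j\}$ is uniform over permutations, i.e.\ precisely the claim that the vanishing boundary is chosen uniformly. Flagging this explicitly is a real improvement over an opaque reference to ``standard arguments.'' That said, your proof does not itself close the gap: the assertion that ``independent increments of the flow and exchangeability of the Dirichlet weights'' imply the uniform choice needs more work (exchangeability of the block sizes does not directly imply exchangeability of the boundary removal), and the Moran-model route, while it does yield the uniform choice at the pre-limit level, requires a careful convergence argument for the discrete flows of bridges to the Fleming--Viot flow together with a continuous-mapping step for the boundary-removal event. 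One small additional observation: your verification that $C$ is a comb uses $T_j\to 0$ a.s., which you justify via $\E T_j=2/j\to 0$; this gives convergence in probability but not a.s.\ by itself --- you should instead note that $T_j$ is monotone decreasing (so convergence in probability along a monotone sequence gives a.s.\ convergence), or bound $\sum_j \P(T_j>\eps)$.

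Finally, you do not use the auxiliary sequence $(W_n)$. The paper needs it to invoke the result of Bertoin--Le Gall that $(\Pi_t)$ is the Kingman coalescent (hence that $N_t$ is the pure-death block-counting process, the $e_k$'s are exponential, and the $(e_k)$ are independent of the asymptotic block frequencies), and also for the density argument upgrading the identity from the countable set $\{W_n\}$ to all $x<y$ in $\{C=0\}$. Your sketch of the comb identity implicitly needs the same density/continuity step, so you should either introduce a dense set or argue via right-continuity of the flow. Overall the proposal is correct in structure and takes the same route as the paper, with the distributional claim left at a comparable --- though more honestly labelled --- level of rigor.
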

\begin{proof}
The fact that $C$ is a comb is straightforward.  Instead, we focus on the first part of Proposition \ref{prop:Kingman-comb}.
Let $(W_n)$ be an independent sequence of i.i.d. uniform r.v. and for any $t>0$ define the equivalence relation $\sim_t$ in $\N$ by
$$
k\sim_t n \Longleftrightarrow \phi_{-t,0}(W_k) = \phi_{-t,0}(W_n).
$$
We denote by $\Pi_t$ the partition of $\N$ induced by $\sim_t$. It is known from \cite{BL03} that $(\Pi_t;t\ge 0)$ (has a c\`adl\`ag modification which) is distributed as the standard Kingman coalescent. In particular, the number of blocks $N_t$ of $\Pi_t$ is a non-increasing process started at $\infty$ which jumps from $k$ to $k-1$ at rate $k(k-1)/2$, so that the intersection $R_t$ of $(0,1)$ with the range of $B_{-t,0}$ is finite (with cardinal $N_t-1$) and non-increasing. Let $J_1>J_2>\cdots$ denote the jump times of $(N_t)$ labelled in decreasing order and for any $i\ge 1$ let $V_i$ be the unique element of $(0,1)$ such that  
$$
R_{J_{i}}=R_{J_{i-1}}\cup \{V_i\}
$$
We also know that $B_{-t,0}$ can be written as  
$$
B_{-t,0}(x) =
\sum_{i=1}^{N_t} \mathbbm{1}_{[0,x]}(U_i)\  \beta_i
$$
where conditional on $N_t =n$, the vector $(\beta_i)$ follows the Dirichlet distribution with parameter $(1,\ldots,1)$. This means that for all $n$, the vector of diameters of the connected components of $(0,1)\setminus\{V_1,\ldots, V_{n-1}\}$ follows the Dirichlet distribution with parameter $(1,\ldots,1)$. Standard arguments imply that the $(V_i)$ form a sequence of i.i.d. uniform r.v. independent of the sequence $(W_n)$ (because they depend deterministically upon the flow of bridges).

Next define $e_k:= J_{k-1}-J_k$ and $T_j=\sum_{k\ge j+1} e_k$. We have already mentioned that $e_k$ is exponentially distributed with parameter $k(k-1)/2$ and it is well-known that the $(e_k)$ are independent of the asymptotic frequencies of the blocks of $(\Pi_t)$, and so are independent of the $(V_i)$. Defining $C$ as in \eqref{eqn:def-comb}, we easily see that a.s. for any $k,n$, for any $t\in [J_i, J_{i-1})$ ($i\ge 1$, $J_0:=+\infty$),
$$
k\sim_t n \Longleftrightarrow \forall j\in\{1,\ldots, i-1\}, \ V_{j}\not\in (W_k\wedge W_n, W_k\vee W_n)\Longleftrightarrow \sup_{(W_k\wedge W_n, W_k\vee W_n)}C <t.
$$
Taking the union of $i$, this can be expressed as follows. Almost surely for any $k,n$, for any $t$ which is not a jump time of $N$, 
$$
\phi_{-t,0}(W_k) = \phi_{-t,0}(W_n) \Longleftrightarrow \sup_{(W_k\wedge W_n, W_k\vee W_n)}C <t.
$$
By density of the $(W_n)$, this implies that a.s. for all $x<y\in(0,1)\setminus \{U_i\}$, for all $t$ which is not a jump time of $N$, 
$$
\phi_{-t,0}(x) = \phi_{-t,0}(y) \Longleftrightarrow \sup_{(x,y)}C <t,
$$
which terminates the proof.
\end{proof}


\begin{remark}
Proposition \ref{prop:Kingman-comb}  is interesting in its own right. It provides a natural interpretation of the r.v. $V_i$
appearing in the definition of the Kingman comb.  
Thinking of the flow of bridges as describing the dynamics of a population of constant size $1$, the $T_i$'s
indicate the dates of branching events giving rise to lineages surviving both until the present. 
For a given value of $T_i$, the two resulting extant subpopulations can be identified with the interval $(V_i^{-},V_i)$  and $(V_i,V_i^+)$ where
\beqnn
V_i^+& = & \inf\{V_j>V_i\ : \  T_j >T_i\}, \\
V_i^-& = & \sup\{V_j <V_i\ : \ T_j >T_i\}, 
\eeqnn
with the convention $\inf\{\emptyset\}=0$ and $\sup\{\emptyset\}=1$. To conclude, not only does the comb 
encapsulate the time and the linear ordering of splitting events that are relevant to the present (the $T_i$'s), but it also retains the size of the sub-populations 
arising from those splitting events (the $V_i$'s).
\end{remark}

\begin{remark}
The comb at time $t=0$ is generated from the sequence $(\phi_{-t,0}; t\geq0)$. Analogously,
for every time $s\in\R$,
one can define a comb $C_s$ from the sequence $(\phi_{t,s}; t\leq s)$. $(C_s; s\in \R)$ naturally defines a stationary 
stochastic process that
will be the subject of future work. We expect that this `evolving Kingman comb' will shed new light on the evolving Kingman coalescent
 studied for example in \cite{PW06} and \cite{PWW11}.
\end{remark}

\subsection{Convergence to the Brownian CPP}
The next proposition relates the Kingman comb at small scale with the Brownian CPP.

\begin{proposition}\label{cvc2} 
The following convergence 
$
S_\eps(C) \Longrightarrow \Ci
$
holds weakly in law as $\eps\to 0$. 
\end{proposition}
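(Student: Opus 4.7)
The plan is to invoke the convergence criterion for random combs recalled in Section~\ref{sec:flow}: it suffices to show that for every $k\ge 2$ and every $0<x_1<\cdots<x_k$, the vector $(d_{S_\eps(C)}(x_i,x_{i+1}))_{i=1}^{k-1}$ converges in law to $(d_\Ci(x_i,x_{i+1}))_{i=1}^{k-1}$. Since $F(\Ci)$ is Poisson with intensity $dt\otimes 2x^{-2}dx$, the coordinates of the limit are independent with $\P(d_\Ci(a,b)\le y)=\exp(-2(b-a)/y)$. So the task reduces to proving, for any $y_1,\ldots,y_{k-1}>0$,
\[
\P\!\left(d_{S_\eps(C)}(x_i,x_{i+1})\le y_i\ \text{for all }i\right)\ \longrightarrow\ \prod_{i=1}^{k-1}\exp\!\left(-\frac{2(x_{i+1}-x_i)}{y_i}\right)\quad(\eps\to 0).
\]

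Translating the scaling to atoms, the event on the left-hand side equals $\{M=0\}$, where
\[
M := \#\!\left\{j\ge 1:\ \exists\, i\le k-1\ \text{with}\ V_j\in(\eps x_i,\eps x_{i+1})\ \text{and}\ T_j>\eps y_i\right\}.
\]
For $\eps$ small the intervals $(\eps x_i,\eps x_{i+1})$ are pairwise disjoint and lie inside $(0,1)$. Conditioning on the sequence $(T_j)$ and using the independence and uniform distribution of the $V_j$'s granted by Proposition~\ref{prop:Kingman-comb}, I obtain
\[
\P(M=0\mid(T_j))\ =\ \prod_{j\ge 1}\bigl(1-u_j\bigr),\qquad u_j:=\sum_{i=1}^{k-1}\eps(x_{i+1}-x_i)\,\mathbbm{1}_{T_j>\eps y_i}.
\]
Only the indices $j$ with $T_j>\eps y^{*}$, where $y^{*}=\min_i y_i$, produce $u_j\ne 0$, and there are exactly $N_{\eps y^{*}}-1$ of them, because $\#\{j:T_j>t\}=N_t-1$ by construction of the $T_j$'s.

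Taking logarithms and using $|{-\log(1-u)}-u|\le u^2$ on $u\in[0,1/2]$, the first-order contribution is
\[
\sum_j u_j\ =\ \sum_{i=1}^{k-1}\eps(x_{i+1}-x_i)(N_{\eps y_i}-1),
\]
and by the classical asymptotic $tN_t\to 2$ in probability as $t\downarrow 0$ for the Kingman block-counting process (an immediate consequence of the pure-death description with rates $n(n-1)/2$), each summand converges in probability to $2(x_{i+1}-x_i)/y_i$. The remainder satisfies $\sum_j u_j^2\le(\max_j u_j)\sum_j u_j\le \eps x_k\cdot\sum_j u_j$, which tends to $0$ in probability. Because the conditional probability lies in $[0,1]$, bounded convergence yields
\[
\P(M=0)\ \longrightarrow\ \exp\!\left(-\sum_{i=1}^{k-1}\frac{2(x_{i+1}-x_i)}{y_i}\right),
\]
which is the sought limit.

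The only delicate step is the uniform control of the Taylor remainder: the product involves $\Theta(1/\eps)$ non-trivial factors, each of the form $1-\Theta(\eps)$, so its logarithm is of order unity and one must check that the aggregated second-order error vanishes. The estimate $\sum_j u_j^2\le \eps x_k\sum_j u_j$ handles this cleanly.
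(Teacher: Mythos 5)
Your argument is correct, but it follows a genuinely different route from the paper's. The paper reduces Proposition \ref{cvc2} to vague convergence of the rescaled atom set on windows $(t_0,\infty)\times I$, and then, because the number of relevant atoms is the random quantity $N_{\eps t_0}$, runs a squeezing argument replacing it by deterministic index ranges $\{1,\dots,[c/\eps]\}$, computes the Binomial-to-Poisson limit of the atom count, and identifies the conditional law of the heights through first- and second-moment estimates showing $i^\eps T_{i^\eps}\to 2$ in probability, before characterizing the limit as a PPP via subsequential limits. You instead invoke the finite-dimensional criterion for weak comb convergence stated in Section \ref{sec:flow} and compute the avoidance probabilities directly: conditioning on $(T_j)$, using that the $V_j$ are i.i.d. uniform and independent of $(T_j)$ (Proposition \ref{prop:Kingman-comb}), you get an explicit product $\prod_j(1-u_j)$, and the only probabilistic input is the law of large numbers $tN_t\to 2$, the Taylor remainder being controlled by your bound $\sum_j u_j^2\le \eps x_k\sum_j u_j$; bounded convergence then finishes. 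This buys a shorter and more elementary proof (no variance computation, no tightness or subsequence extraction, no squeezing between deterministic index ranges), at the price of proving only the finite-dimensional statement and thus leaning on the criterion of Section \ref{sec:flow}; to apply that criterion you should also note (one line) that $F(S_\eps(C))$ puts no mass on any fixed Lebesgue-null set, which holds because each atom $(V_j/\eps,T_j/\eps)$ has an absolutely continuous law. Two cosmetic points: the number of indices with $T_j>t$ is $N_t-1$ when $N_t$ denotes the block count (the paper's proof simply defines $N_t:=\#\{j:T_j>t\}$, which differs by one and is asymptotically irrelevant), and your computation implicitly uses that for $\eps<1/x_k$ the intervals $(\eps x_i,\eps x_{i+1})$ are disjoint subsets of $(0,1)$, which you do state. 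The paper's heavier route does yield, along the way, the explicit description of the limiting window process (counts, locations, heights), which is reused in spirit later, but for the statement of Proposition \ref{cvc2} itself your argument is complete.
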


\begin{proof}

Let $\eps>0$. For every $t_0>0$ and every bounded open interval $I\subseteq (0,\infty)$, define
$$
F_\eps^{I,t_0} \:= \ \{(u_\eps,s_\eps) \in (t_0,\infty)\times I \ :\  S_\eps(C)(u_\eps) = s_\eps \}.
$$
In order to prove Proposition \ref{cvc2}, it is enough to show that
the random set $F_\eps^{I,t_0}$ converges weakly in the vague topology to a PPP on $(0,\infty)^2$ with intensity measure
$$
2 \  1_{x\in I} dx   \ 1_{t\geq t_0} \frac{dt}{t^2}
$$
as $\eps\to 0$. For every $t>0$, define $N_t := \#  \{j \ : \ T_j> t\}$ corresponding to the block counting process of the Kingman coalescent. By definition,  the set $F_\eps^{I,t_0}$
coincides with the set of points of the form $\frac{1}{\eps}(V_i,T_i)$ with $(V_i,T_i)$ belonging to the Kingman comb (as defined in Proposition \ref{prop:Kingman-comb}) and such that 
$$
V_i \in \eps I \ \ \mbox{and} \ \ i\in\{1,\cdots, N_{\eps t_0}\}.
$$
Next, let $c>0$ and let us compare the previous set with 
the set $\bar F_\eps^{I,c}$
consisting of every point of the form $\frac{1}{\eps}(V_i,T_i)$, with $(V_i,T_i)$ again belonging to the Kingman comb, but such that 
$$
V_i \in \eps I \ \ \mbox{and} \ \ i\in\{1,\cdots, [\frac{c}{\eps}]\}.
$$
We claim that $\bar F_\eps^{I,c}$ converges to a PPP with intensity measure
\be\label{eq:i-m-c}
2 \  1_{x\in I} dx   \ 1_{t\geq \frac{2}{c}} \frac{dt}{t^2}.
\ee
Before justifying the claim, let us briefly explain how this entails Proposition \ref{cvc2}. 
On the one hand, for any $c_1<c_2$
$$
\P\left( \   \bar F_\eps^{I,c_1}\  \leq \ F_\eps^{I,t_0}  \ \leq \  \bar F_\eps^{I,c_2} \right) \geq \P\left( N_{\eps t_0} \in\{[\frac{c_1}{\eps}], \cdots, [\frac{c_2}{\eps}]\} \right).
$$ 
It is well known that the renormalized block counting process $\eps t_0  N_{\eps t_0} $ converges to $2$ in probability as $\eps \downarrow 0$. Thus, for any $\delta>0$, 
taking $c_1 = \frac{2-\delta}{t_0}$ and $c_2 = \frac{2+\delta}{t_0}$ in the latter inequality yields
$$
\P\left( \   \bar F_\eps^{I,c_1}\  \leq \ F_\eps^{I,t_0}  \ \leq \  \bar F_\eps^{I,c_2} \right)  \ \to \ 1 \ \ \mbox{as $\eps\to 0$}. 
$$ 
Finally, since this holds for every $\delta>0$,  Proposition \ref{cvc2} easily follows by letting $\delta\to 0$ (assuming that $\bar F_\eps^{I,c}$ converges to a PPP with the intensity measure provided in  (\ref{eq:i-m-c})). \\

It remains to justify the convergence of $(\bar F_\eps^{I,c})$.
Recall from Proposition \ref{prop:Kingman-comb} that the r.v. $V_1,V_2, \cdots$ in the Kingman comb are i.i.d. uniform r.v. on $[0,1]$. As a consequence,
the cardinality $\# \bar F_\eps^{I,c}$ of  $\bar F_\eps^{I,c}$ is distributed as a Binomial r.v. with parameters $([\frac{c}{\eps}], \eps |I|)$,
where $|I|$ refers to the Lebesgue measure of $I$. Standard arguments yield that 
$\# \bar F_\eps^{I,c}$ converges to a Poisson random variable with parameter ${c |I|}$.  \\


Next, let $(u^\eps, \sigma^\eps)$ denote the atoms of $(\bar F_\eps^{I,c})$. Fix $m\in \N^*$ and for every $\eps>0$, let us condition on the event
$\{\# \bar F_\eps^{I,c} = m\}$. (Note that the law of $(T_i; i\geq 0)$ is not affected by the conditioning). This defines a sequence of r.v. $(i_1^\eps,\cdots,i_m^\eps)$ 
of distinct integers in
$\{1,\cdots,[\frac{c}{\eps}]\}$ such that 
$$
(u^\eps,\sigma^\eps) \ = \ \frac{1}{\eps}\left( (V_{i_k^\eps}, T_{i_k^\eps}); \ k\leq m \right).
$$
The statistical description of the comb provided in Proposition \ref{prop:Kingman-comb} implies that:
\begin{enumerate}
\item[(i)] $(V_{i_k^\eps}; k\ \leq m)$ are i.i.d. uniform random variables on $\eps I$, independent of the sequence $\left(  T_{i_k^\eps}; \  k\leq m \right)$.
\item[(ii)] $(i_1^\eps,\cdots,i_m^\eps)$ is a uniform sample  of size $m$ (with no replacement) of $\{1,\cdots,[\frac{c}{\eps}]\}$ independent
of the $T_i$'s. 
\end{enumerate}
From (ii), we get:
\be\label{eq:cv-i}
\frac{\eps}{c}(i_1^\eps,\cdots,i_m^\eps) \ \to \ (U_1,\cdots,U_m) \  \ \mbox{in law},
\ee
where $U_1,\cdots,U_m$ are i.i.d. uniform r.v. on $[0,1]$. We now prove that 
\be\label{eq:cv-T}
\forall k\leq m, \ \ i_k^\eps T_{i_k^\eps} \ \to  \ 2 \ \ \mbox{in probability.}
\ee
In order to see that, we first note that  the $i_k T_{i_k^\eps}$'s are exchangeable and 
to ease the notation, we remove the $k$-subscript until further notice.
For any random variable $X\in L^1$, write $\E_{m,\eps}(X):=\E(X \ | \ \# \bar F_\eps^{I,c} = m )$. Then, using (ii) above: 
\beqn
\E_{m,\eps}(  \ i^\eps T_{i^\eps} \ | \ i_\eps) 
& = & i_\eps \sum_{k\geq i_\eps+1} \frac{2}{k(k-1)}  \  = \ \ \frac{1}{i_\eps} \sum_{k\geq i_\eps+1} \frac{2i_\eps ^2}{k(k-1)} \nonumber \\ 
&= &   \ 2 \int_1^\infty \frac{dy}{y^2} + R_1(i_\eps) =  2 + R_1(i_\eps), \label{eq:cond-ex}
\eeqn
where $R_1$ is a function such that $|R_1(x)|\leq K/x$ for some constant K. Averaging of $i^\eps$, we get:
\be\label{eq:conv-E}
\E_{m,\eps}(  \ i^\eps T_{i^\eps} \ ) \ = \ 2  \ + \ \sum_{k=1}^{[\frac{c}{\eps}]} R_1(k) \ \P(i^\eps=k) \ \to \ 2,  
\ee
using the fact that $\P(i^\eps=k) = 1/[\frac{c}{\eps}]$ and the previous bound on $R_1$. Next,
$$
\mbox{Var}_{m,\eps}(i^\eps T_{i^\eps}) \ = \ \E_{m,\eps}\left(\mbox{Var}_{m,\eps}(i^\eps T_{i^\eps} \ | \ i^\eps)\right) \ + \ \mbox{Var}_{m,\eps}\left(\E_{m,\eps}(i^\eps T_{i^\eps} \ | \ i^\eps)\right).
$$
First, it is not hard to see from (\ref{eq:cond-ex}) that $\E(i^\eps T_{i^\eps} \ | \ i^\eps)$ converges to $2$ in $L^2$, and thus,
the second term on the RHS of the latter equality vanishes as $\eps\to 0$. Let us now deal with the first term
\beqnn
\mbox{Var}_{m,\eps}(i^\eps T_{i^\eps} \ | \ i^\eps) & = & (i^{\eps})^2 \sum_{k\geq i^\eps+1} \left(\frac{2}{k(k-1)}\right)^2  \  = \ \ \frac{1}{i^\eps} 
\left(\frac{1}{i^\eps} \sum_{k\geq i^\eps+1} \left(\frac{2( i^\eps)^2}{k(k-1)}\right)^2\right) \ \\
& = &   \frac{1}{i^\eps} \int_1^\infty \frac{ 4 dy}{y^4}   \ + \ R_2(i_\eps),
\eeqnn
where $R_2$ is a function such that $|R_2(x)|\leq K'/x^2$ for some constant $K'$. Reasoning as in (\ref{eq:conv-E}), 
this shows that the expectation of the RHS of the last equality goes to $0$ as $\eps\to0$. Altogether, this implies that
$$
\mbox{Var}_{m,\eps}(i^\eps T_{i^\eps})  \ \to 0 \ \ \mbox{as $\eps\to0$}.
$$
Together with (\ref{eq:conv-E}), this completes the proof of (\ref{eq:cv-T}). Finally, combining  (\ref{eq:cv-i}) with  (\ref{eq:cv-T})  yields the following convergence in law as $\eps\downarrow 0$
\be\label{eq:cv-T2}
\left(\frac{1}{\eps} T_{i^\eps_1},\cdots, \frac{1}{\eps} T_{i^\eps_m}\right) \ \to \ \left(\frac{2}{c U_1},\cdots,\frac{2}{ c U_m}\right).
\ee 
Let us gather the previous arguments. From the convergence of $\# \bar F_\eps^{I,c}$ and (\ref{eq:cv-T2}), it is not hard to deduce that
$(\bar F_\eps^{I,c})$ is tight. 
Further,  we showed that any sub-sequential limit  $\cF^{I,c}$
must satisfy the following properties:
\begin{enumerate}
\item $\# \cF^{I,c}$ is distributed as a Poisson r.v. with parameter ${c|I|}$. 
\item  Let $\{(u,\sigma)\}$ denote the atoms of $\cF^{I,c}$.
Conditional on $\# \cF^{I,c}$: 
\begin{enumerate}
\item[(i)] $(\sigma)$ and $(u)$ are independent.
\item[(ii)] $(u)$ is a sequence of uniform r.v. on $I$.
\item[(iii)] $(\sigma)$ are i.i.d. r.v. with density 
$$
\frac{d}{dt}\P\left(\frac{2}{cU_1}\leq t\right) \ = \  \frac{d}{dt}\left(1 - \frac{2}{ct}\right)1_{t>2/c}   \ = \ 1_{t>2/c} \frac{2}{c t^2}.
$$
\end{enumerate}
\end{enumerate}
Now these two properties uniquely characterize the law of a PPP with intensity measure
 $$
2 \ 1_{x\in I} dx   \ \ 1_{t > \frac{2}{c}} \frac{dt}{t^2}. 
 $$ 
 This completes the proof of the convergence of $(\bar F_\eps^{I,c})$ and the proof of 
Proposition \ref{cvc2}.
\end{proof}

\subsection{Alternative Proof to Proposition \ref{cvc2}}

The proof of Proposition \ref{cvc2} is based on the statistical description of the Kingman comb given in Proposition \ref{prop:Kingman-comb}. 
Here, we sketch an alternative proof of Proposition \ref{cvc2} relying on totally different techniques, namely Ray-Knight Theorem and a result of Bertoin and Le Gall \cite{BL05} that describes the trajectories of the ancestral lineages in the FV flow of bridges. 

Let $x_0< \cdots < x_n$ and assume that $\eps$ is small enough such that $\eps x_n < 1$.
Define 
$$
\bar Y_i^\eps(t) \ := \ \frac{1}{\eps}\phi_{-\eps t,0}(\eps x_i), \ i\in\{0,\ldots,n\}
$$
By Proposition \ref{prop:Kingman-comb},  $\frac{1}{\eps} d_C(\eps x_{i-1}, \eps x_i)$ 
coincides with the 
hitting time at $0$ of the process $\bar Y_i^\eps(t) - \bar Y_{i-1}^\eps(t)$.
Thus, it remains to show that this vector of hitting times converges in law to $\left(\max_{[x_{i-1}, x_i]} {\cal C}, \ i=1,\ldots,n\right)$, or equivalently, that 
the components of the vector are asymptotically independent and distributed as $\max_{[0, x_i-x_{i-1}]} {\cal C}$ respectively.

Following Theorem 6 in \cite{BL05}, 
the $n$-point motion $(\bar Y_i^\eps, \  i=0,\ldots,n)$ is a coalescing diffusion whose generator is given by ${\cal A}^\eps$:
\begin{equation}
{\cal A}^\eps g\ = \ \frac{1}{2} \sum_{i,j=0}^n y_{i\wedge j} (1-\eps y_{i\vee j}) \frac{\partial g}{\partial y_i \partial y_j} \ + \  \sum_{i=1}^n (\frac{1}{2}-\eps y_i) \frac{\partial g}{\partial y_i},
\end{equation}
for every $g\in C_0^\infty(\R^{n+1})$.
Using standard arguments, one can prove that
$$
\left(\frac{1}{\eps} d_C(\eps x_{i-1}, \eps x_i), \ i=1,\ldots,n \right) \ \Longrightarrow \ \left(\ \inf\{ t \ : \ (\cY_i(t) - \cY_{i-1}(t)) = 0 \},\  i=1,\ldots,n\right).
$$
where $\{\cY_{i}\}_{i=0}^n$ is the diffusion with generator $\cA_0$ and initial condition $(x_0,\ldots,x_n)$.
The $n$-dimensional process $\{\cY_{i}-\cY_{i-1}\}_{i=1}^{n}$ can 
be rewritten as
 $M Y^0$ where $M$ is the $n\times (n+1)$ matrix:
 $$
 M \ = \ \left( \begin{array}{ccccc}  
 -1 & 1 & 0 & 0 & \cdots \\ 
 0  & -1 & 1 & 0 & \cdots \\
  0  & 0 & -1 & 1 & \cdots \\
 \vdots  & \vdots & \vdots & \ddots & \ddots \\ 
 \end{array} \right).
 $$
Thus, the generator of $\{\cY_{i}-\cY_{i-1}\}_{i=1}^{n}$ is given by
 $
 \sum_{i=1}^n d_i \frac{\partial }{\partial u_i} \ + \  \sum_{i,j=1}^n c_{i,j} \frac{\partial^2 }{\partial u_i\partial u_j} 
 $
where
\beqnn
d & = &  M \ \left(\begin{array}{c} \frac{1}{2} \\ \vdots \\ \frac{1}{2} \end{array} \right) \ = \ \ \left(\begin{array}{c} 0 \\ \vdots \\ 0 \end{array} \right) \\
c  & = & \frac{1}{2} M \ \left[  \ y_{i\wedge j} \ \right]_{i,j\leq n} \ ^t M.
\eeqnn
One can readily check that $c$ is diagonal with the diagonal terms given by $\frac{1}{2} \left(y_1-y_0,\cdots,y_n-y_{n-1}\right)$.
 Thus, the processes
 $(\cY_{i}-\cY_{i-1})$'s are distributed as independent Feller diffusions (i.e., satisfying  Eq (\ref{dif}) below) with respective initial condition $(x_i-x_{i-1})$.
The convergence of $S_\eps(C)$ to ${\cal C}$ is then a direct consequence of Lemma \ref{id} below.

\begin{lemma}\label{id}

Let $w$ be a standard Brownian motion. For every $x\in\R$, let $\sigma_x$ be the hitting time of $0$ by the diffusion
\begin{equation}\label{dif}
d z(t) = \sqrt{z(t)} d w(t),  \ \ z(0)= x.
\end{equation}
Then the following identity holds in distribution:
\begin{equation}\label{jhf}
\sigma_x \ = \ \sup_{[0,x]} {\cal C}.
\end{equation}
\end{lemma}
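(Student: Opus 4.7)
The plan is to show that both sides of \eqref{jhf} have the same (explicit) one-dimensional distribution, by direct computation. Concretely, I will verify that
\[
\P(\sigma_x \le t) \;=\; e^{-2x/t} \;=\; \P\bigl(\sup_{[0,x]}\Ci \le t\bigr) \qquad (t>0).
\]

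For the right-hand side, the identity is immediate from the definition of $\Ci$ as a CPP with intensity $\nu(ds)=2s^{-2}ds$. By construction, $F(\Ci)$ is a Poisson point process on $(0,\infty)^2$ with intensity $du\otimes\nu(ds)$, so the event $\{\sup_{[0,x]}\Ci\le t\}$ coincides with $\{F(\Ci)([0,x]\times(t,\infty))=0\}$, which has probability $\exp(-x\int_t^\infty 2s^{-2}ds)=e^{-2x/t}$.

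For the left-hand side, I will use the fact that the diffusion $z$ in \eqref{dif} is the Feller diffusion, i.e.\ a critical continuous-state branching process (CSBP) with branching mechanism $\psi(\lambda)=\lambda^2/2$, and that $0$ is an absorbing state. The Laplace transform satisfies $\E_x[\exp(-\lambda z(t))]=\exp(-x u_t(\lambda))$ where $u_t$ solves $\partial_t u_t=-\psi(u_t)=-u_t^2/2$ with $u_0(\lambda)=\lambda$; integrating gives
\[
u_t(\lambda) \;=\; \frac{2\lambda}{2+t\lambda}, \qquad u_t(\infty):=\lim_{\lambda\to\infty}u_t(\lambda)=\frac{2}{t}.
\]
Since $z$ is absorbed at $0$, $\{\sigma_x\le t\}=\{z(t)=0\}$, and letting $\lambda\to\infty$ yields $\P_x(\sigma_x\le t)=\P_x(z(t)=0)=e^{-xu_t(\infty)}=e^{-2x/t}$, as required.

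The two CDFs agree on $(0,\infty)$, which proves \eqref{jhf}. There is no real obstacle here: the only substantive input is the standard CSBP Laplace-transform formula (equivalently, one can derive $\P_x(\sigma_x\le t)=e^{-2x/t}$ directly by solving the ODE $\tfrac12 x u''(x)=0$ for $u(x)=\P_x(\sigma_x\le t)$ with appropriate boundary conditions, or by scaling together with the fact that $\sigma_x/x$ has a universal law). Matching this with the elementary PPP void-probability computation for the CPP gives the identity in distribution.
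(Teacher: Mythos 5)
Your proof is correct, and it takes a genuinely different and noticeably more elementary route than the paper's. You simply compute the common distribution function $e^{-2x/t}$ of both sides: the right-hand side by the Poisson void-probability formula applied to $F(\Ci)$ on $[0,x]\times(t,\infty)$, and the left-hand side by recognizing $z$ as the critical Feller diffusion (CSBP with $\psi(\lambda)=\lambda^2/2$), solving the Riccati ODE for $u_t(\lambda)$, and using absorption at $0$ to identify $\P_x(\sigma_x\le t)=\P_x(z(t)=0)=e^{-x u_t(\infty)}$. The paper instead produces a single random variable, $\max_{s\in[0,\tau_{2x}]}|w(s)|$ (with $\tau$ the inverse local time of a Brownian motion $w$ at $0$), and shows it matches each side in law: the left-hand side via the second Ray--Knight theorem (identifying $4z$ with the total local time field $\cL^t(\tau_{2x})$, a $0$-dimensional squared Bessel process), and the right-hand side via It\^o excursion theory (the point process of excursion heights being Poisson with intensity $dl\otimes dt/t^2$). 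Your approach is shorter and self-contained; the paper's is heavier but exposes the structural coupling between the Feller diffusion, Brownian local time, and the CPP intensity measure, which resonates with the rest of the paper's Brownian-CPP machinery. Both are valid; yours is the cleaner proof of the one-dimensional identity as stated.
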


\begin{proof}
Let $x\geq0$. Define the local time
at $x$:
\begin{eqnarray*}
L^x(t) \ := \ \lim_{\eps\to 0} \frac{1}{2\eps} \ \int_0^t 1_{w(s) \in[x-\eps,x+\eps]} ds, \ \ {\cal L}^x(t) \ := \ \lim_{\eps\to 0} \frac{1}{2\eps} \ \int_0^t 1_{|w(s)| \in[x-\eps,x+\eps]} ds.
\end{eqnarray*}
and $\tau_y:=\inf\{u \ : \ L^0(u)>y\}$ to be the inverse local time at $0$.  
In order to prove Lemma \ref{id}, we will show that the RHS and LHS of (\ref{jhf}) are identical in law to $ \max_{s\in[0,\tau_{2x}]} |w(s)|$.

We start by showing that the indentity holds for the LHS  of (\ref{jhf}). Define $y(t)= 4 z(t)$. Then it is straightforward to check that $y$ is a standard $0$-dimensional squared Bessel
process (i.e., $d y(t) = \sqrt{2y(t)} d w(t)$) with initial condition $4x$. Furthermore, the hitting time 
at $0$ of $y$ coincides with the one of $z$. We now construct the process $y$ from the local time 
of a standard Brownian motion $w$. By the second Ray-Knight Theorem \cite{K63}, the processes
$$
(L^t(\tau_{2x}); t\geq0) \ \mbox{and  } (L^{-t}(\tau_{2x}); t\geq 0) 
$$
are independent  $0$-dimensional squared Bessel process both starting at $2x$. So their sum 
$({\cal L}^t(\tau_{2x}); t\geq0)$  is a $0$-dimensional squared Bessel process starting at $4x$, i.e, $({\cal L}^t(\tau_{2x}); t\geq0)$ has the same distribution as the process $y$. 
Finally, since the hitting time at $0$ of $({\cal L}^t(\tau_{2x}); t\geq0)$ coincides with the maximum of $|w|$ on $[0,\tau_{2x}]$,
this shows that the LHS of (\ref{jhf}) is identical in law to $ \max_{s\in[0,\tau_{2x}]} |w(s)|$.\\

We now show that the same identity holds for the RHS of  (\ref{jhf}).
For every $l\in\R^+$ such that 
$\tau_{l-}<\tau_{l}$, define $m_l$ 
the height of the excursion on the interval $[\tau_{l-}, \tau_l]$ 
(i.e, $m_l \ := \  \sup\{|w(s)| \ : \ s\in[\tau_{l-}, \tau_l] \}$). From standard excursion theory (see e.g., Section VI.47 \cite{RW87}), 
$\sum_{l \ : \ \tau_{l-}<\tau_{l} } \delta_{l,m_l}$
defines 
a Poisson Point Process with intensity measure $dl\otimes \frac{dt}{t^2}$. It follows that 
for every $a$:
$$
\max\{m_l \ :  \ \tau_{l-}<\tau_l \mbox{  and  } l\leq a\} \ =  \ \max_{s\in [0,\tau_a]} |w(s)|.
$$
Finally since the intensity measure underlying ${\cal C}$ is twice the intensity measure of $\sum_{l \ : \ \tau_{l-}<\tau_{l} } \delta_{l,m_l}$, 
using standard results 
on Poisson point processes, it is not hard to show that the RHS of (\ref{jhf})
is distributed as $\max_{s\in[0,\tau_{2x}]} |w(s)|$. This  completes the proof of Lemma \ref{id}. 
\end{proof}

\section{Conditional Sampling and Sized-Biased Killed Brownian CPP}
\label{c-s}


Let $C$ be the Kingman comb as defined in Proposition \ref{prop:Kingman-comb}. 
For every $\eps>0$, let us consider the partition ${\cal P}_{\eps}$ of $[0,1]$
induced by the equivalence relation
$$
x\sim_{(\eps)} y  \Longleftrightarrow \ \phi_{-\eps,0}(x)=\phi_{-\eps,0}(y).
$$
We let $N_{\eps}$ be the number of equivalence classes (or blocks).  To 
characterize those equivalence classes, let $\tilde C$ be the comb that coincides with $C$ on $(0,1)$ and with $\tilde C(0)=\tilde C(1)=1$
and define the sequence $(L_{\eps}^i)_{i=0}^{N_\eps+1}$ 
where $(L_{\eps}^i)$
is the ranked enumeration of the set $\{x \ : \ \tilde C(x)>\eps\}$. 
It is straightforward to check that the $i^{th}$ block (where blocks are ranked according to their least element) coincides with the interval $(L^{i-1}_\eps, \ L^i_\eps )$. 
For $i\leq N_\eps$, we define the comb $\Bi_{\eps}^i$ as
$$
\Bi_{\eps}^i \ := \ k_{ l^{i}_\eps }C( \cdot + L^{i-1}_\eps  ) \mbox{    with $l_\eps^{i} := L^i_\eps - L^{i-1}_\eps$}
$$
that can be  thought of as the comb encoding the genealogy of the $i^{th}$ block.

This motivates the definition of the comb $M_{n,\eps}$ whose law is characterized as follows. For every bounded measurable $f: \Omega \times(0,\infty)\to\R$,
\begin{equation}\label{eq:def-cond-sampling}
\E\left(f(M_{n,\eps}, l_{n,\eps}) \right) \ := \ \E\left(\frac{\sum_{i=1}^{N_{\eps}} (l_{\eps}^i)^n\, f(\Bi_{\eps}^i, l_{\eps}^i)}{\sum_{i=1}^{N_{\eps}} (l_{\eps}^i)^n}\right).
\end{equation}
In words,
conditioned on a realization of the population, we first generate $n$ independent uniform random variables 
on $[0,1]$, independent of the flow of bridges. If we condition those random variables to fall into the same block,
$l_{n,\eps}$ is defined 
as the size of the chosen block and
$M_{n,\eps}$ is the comb encoding the genealogy of the (averaged) block. The aim of this section is to prove the following result.

\begin{theorem}\label{cvc}
Let  $\Mi$ be a CPP  with intensity measure $\nu(dl) 1_{l<1}$ and ${\cal L}_n$ be an independent Gamma distributed random variable with parameter $(n+1,2)$. Then the following convergence
$$
\left(S_\eps ( M_{n,\eps}), \frac{1}{\eps} l_{n,\eps} \right) \ \Longrightarrow \ (k_{{\cL}_n}(\Mi), {\cal L}_n)
$$
holds weakly in law as $\eps\to 0$. \end{theorem}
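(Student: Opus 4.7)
The strategy is to combine the weak convergence $S_\eps(C)\Rightarrow\Ci$ from Proposition \ref{cvc2} with a law of large numbers over the $\eps$-blocks of $C$, and then to identify the size-biased sampling as converting an $\exp(2)$ block length into a $\text{Gamma}(n+1,2)$ one.

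First, I would rewrite the block structure in terms of the atoms of $C$. In the representation $C=\sum_j T_j\mathbbm{1}_{\{V_j\}}$ from Proposition \ref{prop:Kingman-comb}, the ordered boundaries $(L_\eps^i)$ are exactly the $V_j$'s with $T_j>\eps$, so the rescaled block lengths $X_i^\eps := l_\eps^i/\eps$ are the gaps of $S_\eps(C)$ between consecutive atoms of height $>1$, and $S_\eps(\Bi_\eps^i)$ is the restriction of $S_\eps(C)$ to the $i$-th such gap. By Proposition \ref{cvc2} together with the Poisson structure of the limit, a single block $(X_i^\eps, S_\eps(\Bi_\eps^i))$ converges marginally in law to a pair $(E, k_E(\Mi))$, where $E\sim\exp(2)$ (since $\nu((1,\infty))=2$) and, conditionally on $E$, $k_E(\Mi)$ is a CPP with intensity $\nu(dl)\mathbbm{1}_{l<1}$ killed at length $E$.

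Next, I would upgrade this to a law of large numbers for functionals of all blocks,
\[
\eps \sum_{i=1}^{N_\eps} g\bigl(X_i^\eps, S_\eps(\Bi_\eps^i)\bigr) \ \toP \ 2\,\E\bigl[g(E, k_E(\Mi))\bigr] \quad \text{as } \eps\to 0,
\]
for a sufficient class of $g$, including $g(x,\omega)=x^n$ and $g(x,\omega)=x^n h(x,\omega)$ with $h$ bounded continuous. Applied to the defining ratio (\ref{eq:def-cond-sampling}), this yields
\[
\E\bigl[h(S_\eps(M_{n,\eps}), l_{n,\eps}/\eps)\bigr] \ \longrightarrow \ \frac{\E[E^n h(E, k_E(\Mi))]}{\E[E^n]},
\]
the expectation under the $E^n$-size-biased law. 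Since size-biasing $\exp(2)$ by $E^n$ produces $\text{Gamma}(n+1,2)$, and the conditional law of $k_E(\Mi)$ given $E$ is unchanged by the biasing, the limit equals $\E[h(k_{\cL_n}(\Mi), \cL_n)]$, which proves the theorem.

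The main obstacle is the LLN step: Proposition \ref{cvc2} only gives vague convergence of point measures, which does not directly control the weighted sum over all $N_\eps\sim 2/\eps$ blocks, especially with the unbounded weight $x^n$. I expect two ingredients to resolve this: (i) the exchangeability and Markov structure of the Fleming-Viot flow at level $\eps$, which makes the family $\{(X_i^\eps, S_\eps(\Bi_\eps^i))\}_i$ exchangeable with an explicit joint law --- the $(l_\eps^i)$ are Dirichlet$(1,\ldots,1)$ given $N_\eps$, and conditionally on $(l_\eps^i)$ the sub-combs $(\Bi_\eps^i)$ are independent rescaled Kingman combs conditioned to have MRCA at depth $\le\eps$; and (ii) a uniform $n$-th moment bound on $X_i^\eps$, which follows from the concentration $\eps N_\eps\to 2$ and the $\textrm{Beta}(1,N_\eps-1)$ marginals. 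Together these reduce the weighted sum to a sum of nearly i.i.d.~terms with uniformly integrable weights, whose expectations converge by the single-block convergence described above.
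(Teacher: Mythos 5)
Your overall strategy is close to the paper's: decompose the size-biased expectation \eqref{eq:def-cond-sampling} into per-block contributions, use exchangeability to reduce to a single block, invoke Proposition \ref{cvc2} for single-block convergence together with uniform integrability of the $n$-th-power weight, and identify the limit by observing that size-biasing an $\mathrm{Exp}(2)$ variable by its $n$-th power produces $\mathrm{Gamma}(n+1,2)$. Those ingredients all appear in the paper's proof and are correct.

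Where you diverge is in what concentration statement you actually prove, and here your sketch has a real hole. You want a law of large numbers \emph{in probability},
$$
\eps\sum_{i=1}^{N_\eps} g\bigl(X_i^\eps, S_\eps(\Bi_\eps^i)\bigr)\ \toP\ 2\,\E\bigl[g(E,k_E(\Mi))\bigr],
$$
for $g(x,\omega)=x^n$ and $g(x,\omega)=x^n h(x,\omega)$, and then to pass the ratio to the limit. But the resolution you offer (exchangeability plus a uniform moment bound, so that ``expectations converge by the single-block convergence'') establishes only convergence of the \emph{mean} of this sum. For exchangeable terms, convergence of the mean does not give convergence in probability; you additionally need a decorrelation or variance estimate, and for the comb-dependent functional $x^n h(x,\omega)$ this has to use the conditional-independence structure of the sub-combs given the block sizes in a quantitative way. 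The paper's Proposition \ref{UI} does carry out such a variance argument, but \emph{only} for the size functional $g(x)=x^n$, via the Dirichlet representation $l_i^\eps = \xi_i/S_{N_\eps}$; extending it to include the comb functional would be substantially more work.

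The paper avoids that work by exploiting the ratio structure. Having proved $X_{n,\eps}=N_\eps^{-1}\sum_i(\tfrac{2}{\eps}l_i^\eps)^n\to n!$ in $L^2$ (a purely size-based LLN), it replaces the random normalizer $\sum_i(l_\eps^i)^n$ in $x_\eps$ by the deterministic equivalent $n!(\eps/2)^n N_\eps$, so that $|x_\eps-y_\eps|\le \|g\|_\infty\,|1-X_{n,\eps}/n!|\to 0$ in $L^1$. Since $\E(x_\eps)$ is precisely the left-hand side of \eqref{sdf22}, the problem is reduced to computing $\lim_\eps \E(y_\eps)$, and because $y_\eps$ is an \emph{average} of exchangeable terms, $\E(y_\eps)$ collapses immediately to a single-block expectation $\tfrac{1}{n!}\E\bigl(f\circ S_\eps(\Bi_\eps^1,l_\eps^1)(\tfrac{2}{\eps}l_\eps^1)^n\bigr)$, which converges by Proposition \ref{cvc2} and Corollary \ref{UII}. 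No in-probability LLN for the numerator is ever required. This is the shortcut you are missing: once the denominator concentrates, the numerator can be handled entirely at the level of first moments.
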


\begin{remark}\label{rem-cr}
As already mentioned in the introduction, the (properly rescaled) coalescent point process associated to a critical birth-death process
conditioned on surviving up to time $N$ converges to $k_{\cL_0}(\Mi)$ as $N\to\infty$ \cite{P04}. As a consequence, the previous result states that after rescaling, the limit of $(M_{n,\eps})$ is described in terms a `$n$-size-biased critical birth-death process' (i.e., biased by the $n$-th power of its size) conditioned on  survival up to a large time.
\end{remark}

\subsection{The $n^{th}$ moment of the length of a uniformly chosen block}\label{sect:cv-l2}
In order to prove Theorem \ref{cvc}, we will need to first establish some technical results.
Define
$$
X_{n,\eps} \ = \  \ \frac{1}{N_\eps} \ \sum_{i=1}^{N_{\eps}} \left(\frac{2}{\eps}l_i^\eps\right)^n.
$$
The aim of this section is to prove the following proposition.

\begin{proposition}\label{UI}
As $\eps\to 0$, the sequence $(X_{n,\eps})$ converges to $n !$ in $L^2$. 
\end{proposition}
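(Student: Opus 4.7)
The plan is to condition on $N_\eps$ and exploit the Dirichlet structure of the block sizes. From the explicit description of the Fleming--Viot marginal recalled in Section~\ref{sec:flow}, given $N_\eps=k$ the bridge $B_{-\eps,0}$ is a step function with $k$ jumps whose sizes $(\beta_1,\dots,\beta_k)$ form a Dirichlet$(1,\dots,1)$ vector, and the blocks of $\mathcal{P}_\eps$ are precisely the constancy intervals of $\phi_{-\eps,0}$, whose lengths coincide with the $\beta_i$'s re-ordered by the positions $U_i$. Since the $U_i$'s are i.i.d.\ uniform and independent of $(\beta_i)$ and the Dirichlet$(1,\dots,1)$ law is exchangeable, this reordering preserves the distribution: conditionally on $N_\eps=k$, the vector $(l_\eps^1,\dots,l_\eps^k)$ is Dirichlet$(1,\dots,1)$.

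The standard Dirichlet moment identities
\[
E[(l_\eps^1)^n\mid N_\eps=k]=\frac{n!\,(k-1)!}{(k+n-1)!},\qquad E[(l_\eps^1)^n(l_\eps^2)^n\mid N_\eps=k]=\frac{(n!)^2\,(k-1)!}{(k+2n-1)!},
\]
then give the closed forms
\[
E[X_{n,\eps}\mid N_\eps]=\frac{(2/\eps)^n\,n!}{\prod_{j=0}^{n-1}(N_\eps+j)},\qquad E[X_{n,\eps}^2\mid N_\eps]=\left(\frac{2}{\eps}\right)^{\!2n}\frac{(2n)!+(N_\eps-1)(n!)^2}{N_\eps\prod_{j=0}^{2n-1}(N_\eps+j)}.
\]
The classical fact $\eps N_\eps\to 2$ in probability (a direct consequence of $k T_k\to 2$ as $k\to\infty$, with $T_k=\sum_{j>k}e_j$ the time needed to reach $k$ blocks in the Kingman coalescent) then shows that the two conditional moments converge in probability to $n!$ and $(n!)^2$, respectively. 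Once these convergences are upgraded to $L^1$ convergence of expectations, the decomposition $\mathrm{Var}(X_{n,\eps})=E[\mathrm{Var}(X_{n,\eps}\mid N_\eps)]+\mathrm{Var}(E[X_{n,\eps}\mid N_\eps])$ immediately yields $\mathrm{Var}(X_{n,\eps})\to 0$, hence the $L^2$ claim.

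The main obstacle is precisely this uniform-integrability step: the conditional moments blow up like $(\eps N_\eps)^{-n}$ and $(\eps N_\eps)^{-2n}$ whenever $N_\eps$ is anomalously small. I would handle it by splitting at the threshold $c/\eps$ for some fixed $c\in(0,1)$. On $\{N_\eps\ge c/\eps\}$ the conditional moments are deterministically bounded by constants depending only on $c$ and $n$, so bounded convergence yields the correct limits. On the complementary event, combine the crude bound $X_{n,\eps}^2\le(2/\eps)^{2n}$ (which follows from $\sum_i(l_\eps^i)^n\le 1$ and $N_\eps\ge 1$) with a Chernoff-type estimate
\[
P(N_\eps\le k)=P(T_k\le\eps)\;\le\;e^{\lambda\eps}\prod_{j>k}\frac{j(j-1)/2}{j(j-1)/2+\lambda};
\]
choosing $\lambda=k^2/2$ and using $\log(1+x)\ge x/(1+x_{\max})$ on the resulting product gives $P(N_\eps\le c/\eps)\le \exp\!\left(-c(1-c)/(2\eps)\right)$, exponentially small in $1/\eps$. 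This decay easily dominates the polynomial $(2/\eps)^{2n}$, so the contribution from $\{N_\eps<c/\eps\}$ is negligible, and the $L^2$ convergence follows.
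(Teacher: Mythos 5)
Your argument is correct and reaches the same conclusion via a genuinely different computation. Both your proof and the paper's start from the same structural fact that, conditionally on $N_\eps=k$, the block lengths $(l_\eps^1,\dots,l_\eps^k)$ are Dirichlet$(1,\dots,1)$. From there, however, the paper passes to the representation $l_\eps^i \isd \xi_i/S_{N_\eps}$ with $\xi_i$ i.i.d.\ exponential (their Lemma \ref{lemma:dist}), brackets $X_{n,\eps}$ between two auxiliary variables $Z_\eps^\pm$ on the event $\{\bar N_\eps \in [N_{\eps,\gamma}^-, N_{\eps,\gamma}^+]\}$, and invokes a law-of-large-numbers argument for $\frac{1}{N}\sum \xi_k^n$ and $(N/S_N)^n$ together with $L^p$ moment bounds. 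You instead compute $\E[X_{n,\eps}\mid N_\eps]$ and $\E[X_{n,\eps}^2\mid N_\eps]$ in closed form from the Dirichlet moment identities, observe that these closed forms converge to $n!$ and $(n!)^2$ respectively once $\eps N_\eps \to 2$, and finish with a dominated-convergence argument split on $\{N_\eps \ge c/\eps\}$ versus its complement. The ingredient you share with the paper is the Chernoff/large-deviation bound controlling the bad event $\{N_\eps < c/\eps\}$ (compare your exponential bound with the paper's Lemma \ref{neps}), and the crude bound $X_{n,\eps}^2 \le (2/\eps)^{2n}$, which the paper also uses in the form $Z_\eps < (2/\eps)^n$. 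Your route is more explicit and arguably more elementary, since the Dirichlet moment formulas eliminate the need for the auxiliary $\xi_k/S_N$ representation and the LLN machinery; the paper's route is more robust in flavor but at the cost of introducing the $Z_\eps^\pm$ sandwich and the $A_N,B_N$ factorisation. One small remark: the conditional variance decomposition you mention at the end is not really needed — once you have $\E[X_{n,\eps}^2]\to (n!)^2$ and $\E[X_{n,\eps}]\to n!$, the identity $\E[(X_{n,\eps}-n!)^2] = \E[X_{n,\eps}^2]-2n!\,\E[X_{n,\eps}]+(n!)^2$ gives the $L^2$ convergence directly.
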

Before proceeding with the proof, we first deduce an easy corollary of Proposition \ref{UI}.

\begin{corollary}\label{UII}
The collection of r.v. $\{(\frac{2}\eps l_1^\eps)^n\}_{\eps>0}$ is uniformly integrable.
\end{corollary}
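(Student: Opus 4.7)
My plan is to deduce uniform integrability from a uniform $L^2$ bound on $(\tfrac{2}{\eps} l_1^\eps)^n$, which in turn I would obtain by applying Proposition \ref{UI} with exponent $2n$ in place of $n$. The essential ingredient is the conditional exchangeability of the block lengths $(l_1^\eps, \ldots, l_{N_\eps}^\eps)$ given $N_\eps$. I would verify this by invoking the representation from Proposition \ref{prop:Kingman-comb}: the atoms of the Kingman comb whose heights exceed $\eps$ sit at the points $V_1, \ldots, V_{N_\eps - 1}$, which are (conditionally on $N_\eps$) i.i.d.\ uniform random variables on $(0,1)$, independent of the heights $T_j$. Since the block lengths $l_i^\eps$ are exactly the spacings between consecutive order statistics of these uniform points (together with $0$ and $1$ as boundary points), the classical exchangeability of uniform spacings applies.

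With exchangeability in hand, the identity
\[
\E\!\left[\left(\tfrac{2}{\eps}\, l_1^\eps\right)^{2n}\right] \ = \ \E\!\left[\frac{1}{N_\eps} \sum_{i=1}^{N_\eps} \left(\tfrac{2}{\eps}\, l_i^\eps\right)^{2n}\right] \ = \ \E[X_{2n, \eps}]
\]
is immediate (after conditioning on $N_\eps$), and Proposition \ref{UI} applied with $n$ replaced by $2n$ yields $\E[X_{2n, \eps}] \to (2n)!$ in particular in $L^1$, so the left-hand side is uniformly bounded in $\eps$. Setting $Y_\eps := (\tfrac{2}{\eps} l_1^\eps)^n$, this says $\sup_\eps \E[Y_\eps^2] < \infty$, which is a classical sufficient condition for the uniform integrability of $\{Y_\eps\}_{\eps > 0}$. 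The only step that genuinely needs care is the conditional exchangeability claim, and as indicated it follows directly from the description of the Kingman comb already established.
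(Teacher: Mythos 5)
Your proof is correct and follows essentially the same route as the paper: bound $\E[(\tfrac{2}{\eps}l_1^\eps)^{2n}]$ by $\E[X_{2n,\eps}]$ via the conditional exchangeability of the block lengths given $N_\eps$, and then invoke Proposition \ref{UI} with exponent $2n$ to get a uniform bound. The paper phrases the final step via Cauchy--Schwarz plus tightness of $V_\eps$ rather than citing the classical ``uniform $L^2$ bound implies UI'' criterion, but the substance is identical.
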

\begin{proof}
Let $V_\eps = (\frac{2}\eps l_1^\eps)^n$. Applying Cauchy--Schwartz inequality:
\begin{eqnarray*}
\E(V_\eps 1_{V_\eps>M})^2 & \le & \E(V_\eps^2) \P(V_\eps>M) \\
					   & =  & \E\left( \ \E\left( \left.\left(\frac{2}\eps l_1^\eps\right)^{2n} \ \right| \ N_\eps\right) \ \right)  \ \P(V_\eps>M)  \\
					   & = &  \E\left(  \ \E\left(\left.\frac{1}{N_\eps}\sum_{i=1}^{N_\eps}\left(\frac{2}\eps l_i^\eps\right)^{2n} \ \right|\ N_\eps\right)  \ \right )  \ \P(V_\eps>M) \\
                                              & = & \E(X_{2n,\eps}) \P(V_\eps>M), 					    	 
\end{eqnarray*}
where the second equality follows from the fact that conditioned on $N_\eps$,
the random variables $\{l_i^\eps\}_{i=1}^{N_\eps}$ are exchangeable. On the the one hand, Proposition \ref{UI}
implies that 
$$
\sup_{\eps>0} \E(X_{2n,\eps}) <\infty.
$$
On the other hand, Proposition \ref{cvc2} implies that  $V_\eps$
converges in law to $(l_1({\mathcal C}))^n$, and thus $\{V_\eps\}_{\eps>0}$ is tight.
This ends the proof of the lemma.
\end{proof}
%
%
We now proceed with the proof of Proposition \ref{UI}. We first need to define the r.v. $Z_{n,\eps}$.
Let us consider a sequence $\{e_k\}_{k\geq 1}$ of independent random variables where $e_k$ is an exponential random variable with parameter 
$\frac{k(k-1)}{2}$ and an independent sequence $\{\xi_k\}_{k\geq 1}$ of i.i.d exponential random variables with parameter $1$. Now set
\begin{equation}\label{zeps}
Z_{n,\eps} := \left( \ \frac{1}{\bar N_\eps} \ \sum_{k=1}^{\bar N_\eps} \ \left(\frac{\xi_k}{S_{\bar N_\eps}} \frac{2}{\eps}\right)^n \  \right),
\end{equation}
where   $\bar N_\eps := \ \inf\{n \ : \ \sum_{k\geq n} e_k < \eps\}$, and $S_n=\sum_{k=1}^n \xi_k$.

\begin{lemma}\label{lemma:dist}
For all $n$ and $\eps$, the r.v. $Z_{n,\eps}$ and $X_{n,\eps}$ are equally distributed.
\end{lemma}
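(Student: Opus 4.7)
The plan is to decouple the Kingman comb into its vertical and horizontal components using the explicit representation from Proposition \ref{prop:Kingman-comb}. The heights $(T_j)_{j \geq 1}$ are measurable functions of the sequence $(e_k)$ alone, while the positions $(V_j)_{j \geq 1}$ are i.i.d.\ uniform on $(0,1)$ and independent of the $(e_k)$. This factorization is the key ingredient of the proof, since the sequence $(e_k)$ appearing in the definition of $Z_{n,\eps}$ has the same law as the vector of holding times of the Kingman block-counting process $(N_t)$. Consequently $\bar N_\eps$ and the block count $N_\eps$ of $\Pc_\eps$ are equal in distribution (modulo the paper's indexing convention, which amounts to comparing two equivalent ways of recording the hitting times of $N_t$).

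The main step is to compute the conditional law of the block lengths $(l_1^\eps,\ldots,l_{N_\eps}^\eps)$ given the sequence $(e_k)$. Since the blocks of $\Pc_\eps$ are exactly the connected components of $(0,1)\setminus\{V_j : T_j>\eps\}$, and since the set of indices $\{j : T_j > \eps\}$ is measurable with respect to $(e_k)$, the independence and exchangeability of the $(V_j)$ imply that conditionally on $(e_k)$ the corresponding positions form an i.i.d.\ uniform sample on $(0,1)$. Their order statistics, together with the endpoints $0$ and $1$, therefore yield a vector $(l_i^\eps)$ whose law, conditionally on $N_\eps$, is Dirichlet$(1,\ldots,1)$ with $N_\eps$ parameters.

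Applying the standard representation of a Dirichlet$(1,\ldots,1)$ vector as normalised independent exponentials, I can pick an i.i.d.\ Exp$(1)$ sequence $(\xi_k)$ independent of $(e_k)$ and obtain
$$
(l_1^\eps,\ldots, l_{N_\eps}^\eps) \ \stackrel{d}{=}\  \left(\frac{\xi_1}{S_{N_\eps}},\ldots,\frac{\xi_{N_\eps}}{S_{N_\eps}}\right).
$$
Substituting this identity into the definition of $X_{n,\eps}$ and identifying $N_\eps$ with $\bar N_\eps$ recovers precisely the expression defining $Z_{n,\eps}$, which finishes the proof. The argument is essentially bookkeeping; the only mildly delicate point is checking that the indexing conventions match, so that the number of Dirichlet parameters available for $(l_i^\eps)$ is the same as the number of normalised exponentials used in $Z_{n,\eps}$. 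Everything else is a direct application of the Dirichlet-to-exponentials identity together with the independence of heights and positions in the Kingman comb.
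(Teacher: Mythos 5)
Your argument is correct and coincides with the paper's: both identify $N_\eps$ with the Kingman block count at time $\eps$, use that the block lengths are conditionally $\mathrm{Dirichlet}(1,\ldots,1)$ given $N_\eps$, and invoke the standard representation of a flat Dirichlet vector as normalised i.i.d.\ $\mathrm{Exp}(1)$ variables. Your caution about the indexing is in fact warranted: since $\sum_{k\ge n} e_k = T_{n-1}$ while $N_\eps = k$ precisely when $T_k \le \eps < T_{k-1}$, the paper's definition gives $\bar N_\eps = N_\eps + 1$, so the lemma would hold exactly with $\bar N_\eps := \inf\{n : \sum_{k>n} e_k < \eps\}$; this off-by-one is harmless for the limiting statement in Proposition~\ref{UI}.
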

\begin{proof}
For the standard Fleming-Viot flow of bridges, the number of blocks 
$N_\eps$  is distributed as the value at time $\eps$
of a pure death process descending from $\infty$
with rate $k(k-1)/2$ at level $k$, and further, conditioned on $N_\eps$,
the vector $\{l_i^\eps\}_{i=1}^{N_\eps}$
is distributed as a $N_\eps$-dimensional Dirichlet random variable with parameter 
 $(1,\cdots,1)$. Finally, it is well known that
 the n-dimensional Dirichlet random variable with parameter 
 $(1,\cdots,1)$ is distributed as  
 $$
 (\xi_1/S_n,\cdots,\xi_n/S_n),
 $$
which completes the proof of the lemma.
\end{proof}

\begin{lemma}\label{neps}
Let $\gamma>0$ and define
\begin{eqnarray*}
N_{\eps,\gamma}^- \ = \ \left\lfloor(1-\gamma)\frac{2}{\eps}\right\rfloor, \ \mbox{and} \ \ N_{\eps,\gamma}^+ \ = \ \left\lceil(1+\gamma)\frac{2}{\eps}\right\rceil.
\end{eqnarray*}
Then for any $k\in\N$, $\P\left( \ \bar N_\eps \notin[N_{\eps,\gamma}^-, N_{\eps,\gamma}^+] \ \right)/\eps^k\to0$
as $\eps\to0$.
\end{lemma}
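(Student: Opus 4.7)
The plan is to translate the event $\{\bar N_\eps\notin[N^-_{\eps,\gamma},N^+_{\eps,\gamma}]\}$ into a tail event for the random variables $T_j=\sum_{k\geq j+1}e_k$, and then to estimate that tail with moment bounds of arbitrarily high order. Since $j\mapsto T_j$ is strictly decreasing, the definition of $\bar N_\eps$ gives $\{\bar N_\eps\leq n\}=\{T_{n-1}<\eps\}$, and consequently
\[
\P\bigl(\bar N_\eps>N^+_{\eps,\gamma}\bigr)=\P\bigl(T_{N^+_{\eps,\gamma}-1}\geq\eps\bigr),\qquad \P\bigl(\bar N_\eps<N^-_{\eps,\gamma}\bigr)=\P\bigl(T_{N^-_{\eps,\gamma}-2}<\eps\bigr),
\]
so it suffices to produce an $o(\eps^k)$ bound for each.

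Writing $e_k=\frac{2}{k(k-1)}\xi_k$ with $(\xi_k)$ i.i.d.\ $\mathrm{Exp}(1)$, the telescoping identity $\frac{2}{k(k-1)}=\frac{2}{k-1}-\frac{2}{k}$ gives $\E T_j=\frac{2}{j}$. With $j\sim(1\pm\gamma)2/\eps$, the mean becomes $\eps/(1\pm\gamma)$, so in both cases we are dealing with a deviation of $T_j$ from its mean of order $\Theta(\eps)$ while the mean itself is also $\Theta(\eps)$. Because the summands are independent and centered once one subtracts the mean, Rosenthal's inequality (or a direct combinatorial expansion) gives, for any integer $p\geq 1$,
\[
\E\bigl(|T_j-\E T_j|^{2p}\bigr)\leq C_p\Bigl[\Bigl(\textstyle\sum_{k\geq j+1}\tfrac{4}{k^2(k-1)^2}\Bigr)^p+\sum_{k\geq j+1}\bigl(\tfrac{2}{k(k-1)}\bigr)^{2p}\E((\xi_k-1)^{2p})\Bigr]=O\bigl(j^{-3p}\bigr),
\]
since the first bracket is $O(j^{-3p})$ and the second is $O(j^{-(4p-1)})\subset O(j^{-3p})$ for $p\geq 1$. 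With $j\asymp 1/\eps$, Markov's inequality then yields $\P(|T_j-\E T_j|\geq c(\gamma)\eps)=O(\eps^{3p})/(c\eps)^{2p}=O(\eps^p)$, and taking $p=k+1$ concludes the proof.

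The argument has no conceptual obstacle; it is a concentration estimate for a sum of independent exponentials with geometrically decaying weights. The only care required is bookkeeping around the floor/ceiling in the definition of $N^\pm_{\eps,\gamma}$, and verifying that the effective deviation constant $c(\gamma)=\gamma/(1+\gamma)\wedge\gamma/(1-\gamma)$ remains bounded away from zero. If one preferred a sharper outcome, a Chernoff bound based on the explicit moment generating function $\E(e^{\lambda T_{n-1}})=\prod_{k\geq n}(1-2\lambda/(k(k-1)))^{-1}$ actually yields stretched-exponential decay $O(e^{-c/\eps})$, but that level of precision is not needed here since polynomial moments already give super-polynomial rates.
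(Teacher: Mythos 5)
Your proof is correct and takes a genuinely different route from the paper's. The paper establishes the exponential moment formula
$\E(\exp(-\lambda N^2 R_N)) = \exp(-N f(\lambda) + o(N))$ with
$f(\lambda) = \int_1^\infty \ln\bigl(\tfrac{u^2/2}{u^2/2+\lambda}\bigr)\,du$, then optimizes over $\lambda$ in a Chernoff/Markov bound to obtain large-deviation decay $\exp(-cN)$ for $N R_N$; this is exactly the stretched-exponential route you sketch in your last paragraph. You instead reformulate the event as a $\Theta(\eps)$-deviation of $T_j$ from its mean $2/j$ (via the clean telescoping $\E T_j = 2/j$), bound central moments of order $2p$ via Rosenthal, and apply Markov to get $O(\eps^p)$ for every $p$. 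Your reformulation $\{\bar N_\eps \le n\}=\{T_{n-1}<\eps\}$ is right (it relies on the monotonicity of $n\mapsto R_n$), the order-of-magnitude checks ($\sum_{k>j}k^{-4}\asymp j^{-3}$, deviation threshold $\asymp\gamma\eps$) are correct, and $p=k+1$ closes the argument. The paper's method is sharper (exponential rate vs.\ arbitrary-polynomial), but only super-polynomial decay is used downstream, so nothing is lost; your version avoids computing the exact Laplace transform at the price of invoking Rosenthal's inequality, which is a reasonable trade.
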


\begin{proof}
If  $R_N=\sum_{k\geq N} e_k$, then
$$
\E(\exp(-\lambda N^2 R_N)) \ =  \ \exp\left(\sum_{k\geq N} \ \ \ \log\left( \frac{\frac{k}{2N}(\frac{k}{N} - \frac{1}{N})}{ \lambda  \ + \ \frac{k}{2N}(\frac{k}{N} - \frac 1 N)  } \right) \ \right) 
$$
For every $\lambda>-1/2$,
the function $u\to \ln\left(\frac{\frac{u^2}{2}}{\frac{u^2}{2}+\lambda}\right)$ is integrable on $[1,\infty)$. Thus, as $N\to \infty$,
it is not hard to show the following convergence
\begin{equation}
\label{eqr1}
\frac{1}{N}\sum_{k\geq N} \log\left( \frac{\frac{k}{2N}(\frac{k}{N}-\frac 1 N)}{ \lambda  \ + \ \frac{k}{2N}(\frac k N - \frac 1 N)} \right) \to \int_1^\infty \ln\left(\frac{\frac{u^2}{2}}{\frac{u^2}{2}+\lambda}\right) du,
\end{equation}
or equivalently,
\begin{equation}\label{eq:exp-mom}
\E( \ \exp(-\lambda N^2 R_N) \ ) \ = \ \exp\left( \ -N \ \int_1^\infty \ln\left(\frac{\frac{u^2}{2}}{\frac{u^2}{2}+\lambda}\right) du \ \right) \gamma_N(\lambda).
\end{equation}
with $\gamma_N(\lambda)\to 1$ as $N\to\infty$. From Chebyshev's inequality this implies:
\begin{eqnarray*}
\P\left(\  N R_N  < 2(1-\gamma) \right) 
& \leq &  \exp\left(\lambda N 2(1-\gamma)\right) \ \E(\exp(-\lambda N^2 R_N)) \\
&   =   & \exp\left(N(\lambda 2(1-\gamma) - \ f(\lambda) )\ \right) \ \gamma_N(\lambda) \\
\mbox{with}
&  &
f(\lambda) = \int_1^\infty \ln\left(\frac{\frac{u^2}{2}}{\frac{u^2}{2}+\lambda}\right) du.
\end{eqnarray*}
Since $f(0)=0$ and $f'(0)=2$, we can choose $\lambda>0$ small enough such that 
$\lambda(2-\gamma) - \ f(\lambda)<0$. By the same token,
for every $\lambda\in(-1/2,0)$ and $N\in\N$:
\begin{eqnarray*}
\P\left(\  N R_N  \geq 2(1+\gamma) \right) 
& < &  \exp\left(\lambda N 2(1+\gamma)\right) \ \E(\exp(-\lambda N^2 R_N)) \\
&   =   & \exp\left(N(\lambda 2(1+\gamma) - \ f(\lambda) )\ \right) \ \gamma_N(\lambda)
\end{eqnarray*}
and again, we can choose $\lambda<0$ small enough such that 
$\lambda(2+\gamma) - \ f(\lambda)<0$. To complete the proof, we  combine the previous large deviation estimates with the observation
$$
\{\bar N_\eps < N_{\eps,\gamma}^+\} \ = \ \{ R_{N_{\eps,\gamma}^+} \geq \eps\} \ = \ \{ N_{\eps,\gamma}^+  R_{N_{\eps,\gamma}^+} \geq N_{\eps,\gamma}^+ \eps\}
$$
with $N_{\eps,\gamma}^+ \eps = 2(1+\gamma)$ and
$$
\{\bar N_\eps \geq N_{\eps,\gamma}^-\} \ = \ \{N_{\eps,\gamma}^- R_{N_{\eps,\gamma}^-} \leq N_{\eps,\gamma}^- \eps\}
$$
with $N_{\eps,\gamma}^- \eps = 2(1- \gamma)$. Combining this with the previous inequalities shows that
$\P\left( \ \bar N_\eps \notin[N_{\eps,\gamma}^-, N_{\eps,\gamma}^+] \ \right)$ goes exponentially fast to $0$ in $\eps$ as $\eps\to0$. This  completes the proof of Lemma 
\ref{neps}.
\end{proof}

\begin{proof}[Proof of Proposition \ref{UI}] From Lemma \ref{lemma:dist}, 
it is enough to show that $Z_{n,\eps}$ converges to $n!$ in $L^2$. To ease the notation, we drop the dependence in $n$ and write $Z_\eps\equiv Z_{n,\eps}$. Let us now introduce two auxiliary variables:
\begin{eqnarray*}
Z_\eps^+ = \left( \ \frac{1}{N_{\eps,\gamma}^-} \ \sum_{k=1}^{N_{\eps,\gamma}^+} \ \left(\frac{\xi_k}{S_{N_{\eps,\gamma}^-}} \frac{2}{\eps}\right)^n \  \right), \ \ \ \ 
Z_\eps^- = \left( \ \frac{1}{N_{\eps,\gamma}^+} \ \sum_{k=1}^{N_{\eps,\gamma}^-} \ \left(\frac{\xi_k}{S_{N_{\eps,\gamma}^+}} \frac{2}{\eps}\right)^n \  \right) 
\end{eqnarray*}
Since $Z_\eps\ <  \ \left(\frac{2}{\eps}\right)^n$ and $Z_{\eps}^-\leq Z_\eps \leq Z_\eps^+$ on $\{\bar N_\eps\in[N_{\eps,\gamma}^-, N_{\eps,\gamma}^+]\}$, for every $\gamma>0$:
$$
\E\left(Z_\eps-n!\right)^2 < \left(n! +  \left(\frac{2}{\eps}\right)^n\right)^2 \P(\bar N_\eps\notin[N_{\eps,\gamma}^-, N_{\eps,\gamma}^+]) \ + \ \E(Z_{\eps,\gamma}^+-n!)^2 \ + \   \E(Z_{\eps,\gamma}^--n!)^2. 
$$
Lemma \ref{neps} implies that the first term vanishes as $\eps\to0$ and it remains to show that 
$$
\lim_{\gamma\to0} \ \lim_{\eps\to0} \  \E(Z_{\eps,\gamma}^+-n!)^2, \  \E(Z_{\eps,\gamma}^--n!)^2 =0
$$
We only show the first limit. The second limit can be shown along the same lines.
We can rewrite
\begin{eqnarray*}
Z_{\eps,\gamma}^+ 
& = & 
\frac{N_{\eps,\gamma}^+}{N_{\eps,\gamma}^-}(1-\gamma)^n \ \frac{1}{N_{\eps,\gamma}^+} 
\sum_{k=1}^{N_{\eps,\gamma}^+} \ \left(\frac{\xi_k}{S_{N_{\eps,\gamma}^-}} \frac{2}{\eps(1-\gamma)}\right)^n \\
& = & \ c_{\eps,\gamma}  \  \ 
\underbrace{\frac{1}{N_{\eps,\gamma}^+} \sum_{k=1}^{N_{\eps,\gamma}^+} \ ({\xi_k})^n}_{:=A_{N_{\eps,\gamma}^+}}  \ \  
\underbrace{\left(\frac{N^-_{\eps,\gamma}}{S_{N^-_{\eps,\gamma}}}\right)^n}_{:=B_{N_{\eps,\gamma}^-}} 
\end{eqnarray*}
with 
$$
c_{\eps,\gamma}= \frac{N_{\eps,\gamma}^+}{N_{\eps,\gamma}^-}(1-\gamma)^n \left(\frac{2 N_{\eps,\gamma}^-}{\eps}\right)^n \ \ \mbox{so that 
$\lim_{\gamma\to0} \ \lim_{\eps\to0} c_{\eps,\gamma}=1$.}
$$
Next, writing $A=\E(\xi^n)=n!$ and $B=\left(\frac{1}{\E(\xi)}\right)^n=1$, we have
\begin{eqnarray*}
 A_{N_{\eps,\gamma}^+}B_{N_{\eps,\gamma}^-} - AB & = & B  (A_{N_{\eps,\gamma}^+} - A) \ + \ A(B_{N_{\eps,\gamma}^-}-B) + (A_{N_{\eps,\gamma}^+}-A)(B_{N_{\eps,\gamma}^-}-B) \\
							      & = &  B  (A_{N_{\eps,\gamma}^+} - A) \ + \ A_{N_{\eps,\gamma}^+} B_{N_{\eps,\gamma}^-}(1-\frac{B}{B_{N_{\eps,\gamma}^-}}) \ + \ B_{N_{\eps,\gamma}^-}(  A_{N_{\eps,\gamma}^+}-A)(1-\frac{B}{B_{N_{\eps,\gamma}^-}}). 
\end{eqnarray*}
and in order to prove Proposition \ref{UI}, it remains to show that the RHS of the equality goes to $0$ in $L^2$ as $\eps$ goes to $0$.
By applying Cauchy--Schwartz inequality several times, it is easy to show that the RHS 
converges to $0$ in $L^2$ if for all $p\in\N$,
\begin{eqnarray}
 & \ (A_N -  A)  \to 0 \mbox{   and} \ \ (\frac{B}{B_{N}}-1) \to 0 \ \ \mbox{as $N\to\infty$ in $L^p$}  \label{lln}\\
 \mbox{and} &  \ \sup_{N} \ \E\left(A_{N} ^p\right)<\infty  \ \ \mbox{and} \ \ \sup_{N} \ \E\left(B_N^p\right)<\infty. \label{sup}
\end{eqnarray}
Condition (\ref{lln}) is a law of large number and can easily be checked using the fact that the $\xi_i$'s are i.i.d exponential random variables.
The first condition of (\ref{sup}) directly follows from the first part of (\ref{lln}). For the second assertion of (\ref{sup}),
we first note that $S_N$ is distributed as a Gamma distribution with parameter $(N,1)$, and thus:
\beqnn
\E(B_N^p) \ = \ N^{np} \frac{1}{\Gamma(N)} \int_{0}^\infty x^{N-1-np} e^{-x} dx \ = \ N^{np} \frac{(N-np-1) !}{(N-1)!} \ \to 1
\eeqnn
as $N\to\infty$.
This ends the proof of Proposition \ref{UI}.
\end{proof}

\subsection{Proof of Theorem \ref{cvc}}

We now proceed with the proof of Theorem \ref{cvc}.
Let $0<x_0<x_1<\cdots <x_{n-1}$ and let
$
g:\R^n\rightarrow \R
$
be an arbitrary bounded and continuous function. Define $f:\Omega \times (0,\infty)\to\R$ as  
$$
f(\omega, x) \ := \ g\left(d_\omega(x_0,x_1), \cdots, d_\omega(x_{n-2},x_{n-1}),  x\right).
$$
We aim at showing that
\beqn\label{sdf22}
\lim_{\eps\to 0}\E\left(f(S_\eps(M_{n,\eps}), l_{n,\eps}/\eps)\right) \ & =&  \E\left(f(k_{\cL_n}({\cal M}), \cL_n) \right), 
\eeqn
where $({\cal M}, \cL_n)$ are defined as in Theorem \ref{cvc}. Set  $S_\eps(\Bi_{\eps}^i, l_{\eps}^i)= (S_\eps(\Bi_{\eps}^i), \frac{1}{\eps}l_{\eps}^i)$ and
\begin{eqnarray*}
x_\eps =  \sum_{i=1}^{N_{\eps}} f\circ S_\eps(\Bi_{\eps}^i, l_{\eps}^i)(l_i^\eps)^n \ / \ \sum_{i=1}^{N_{\eps}} (l_i^\eps)^n  \ \ \ , \ \ 
y_\eps =  \frac{1}{n!} \frac{1}{N_{\eps}}\sum_{i=1}^{N_{\eps}} f\circ S_\eps(\Bi_{\eps}^i, l_{\eps}^i)  (\frac{2}{\eps} l_{\eps}^i)^{n} 
\end{eqnarray*}
Straightforward manipulation yields that for all $\eps >0$,
$$
| x_\eps-y_\eps  | 
\ \leq \  ||g||_\infty \left| 1 - \frac{X_{n,\eps}}{n!} \right|.
$$
where $X_{n,\eps}$ is defined as in Section \ref{sect:cv-l2}.
From Proposition \ref{UI}, the RHS goes to $0$ in $L^1$ 
as $\eps\to0$. Now by definition
$$
\E\left(f(S_\eps(M_{n,\eps}), l_{n,\eps}/\eps)\right) = \E(x_\eps),
$$
and thus
$$
\lim_{\eps\to0}   \ \E\left(f(S_\eps(M_{n,\eps}), l_{n,\eps}/\eps)\right) \ - \ \frac{1}{n !}\E\left( \frac{1}{N_{\eps}}\sum_{i=1}^{N_{\eps}} f\circ S_\eps(\Bi_{\eps}^i, l_{\eps}^i) \left(\frac{2}{\eps} l_{\eps}^i  \right)^n \right)    \ = \ 0.
$$
Next, using exchangeability, we get that
\beqnn
\E\left( \ \frac{1}{N_{\eps}}\sum_{i=1}^{N_{\eps}} f\circ S_\eps(\Bi_{\eps}^i, l_{\eps}^i) \left(\frac{2}{\eps} l_{\eps}^i  \right)^n \right) & = &
\left.\E\left( \ \E\left(\ \frac{1}{N_{\eps}}\sum_{i=1}^{N_{\eps}} f\circ S_\eps(\Bi_{\eps}^i, l_{\eps}^i) \left(\frac{2}{\eps} l_{\eps}^i  \right)^n \ \right| \ N_\eps \right) \ \right) \\
& = &\left.
\E\left( \ \E\left( f\circ S_\eps(\Bi_{\eps}^1, l_\eps^1) \left(\frac{2}{\eps} l_{\eps}^1  \right)^n \ \right| \ N_\eps \right) \ \right) \\
& = &
\E\left( f\circ S_\eps(\Bi_{\eps}^1, l_\eps^1) \left(\frac{2}{\eps} l_{\eps}^1 \right)^n \right). 
\eeqnn
By Proposition \ref{cvc2}, 
$
S_\eps(\Bi_\eps^1,   l_{\eps}^1)
$
converges weakly to
$
(\bar {\cal C}, l_1(\Ci)),
$
where $\bar {\cal C} = k_{l_1}{\cal C}$.
Using the uniform integrability of  $((\frac{2}{\eps}l_1^\eps)^n)_{\eps>0}$ (by Corollary \ref{UII}),
this yields
\beqn\label{sdf2}
\lim_{\eps\to 0}
\E\left(f(S_\eps(M_{n,\eps}), l_{n,\eps}/\eps)\right) \ & = & \ \frac{2^n}{n !}  \E\left(f({\bar \Ci}, l_1(\Ci)) l_1(\Ci)^n \right). 
\eeqn
To complete the proof of (\ref{sdf22}) (and thus of Theorem \ref{cvc}), we first note that 
$l_1({\mathcal C})$  is an exponential random variable with parameter $2$, since 
$$
\P(l_1({\cal C}) \geq x) \ = \ \exp\left(-2\int_{l\in(0,x)}\int_{u\in[1,\infty)}   \ \frac{du \ dl}{u^2} \right) \ = \ \exp(-2 x),
$$
Second, note that $(\bar \Ci, l_1(\Ci))$ is identical in law
to the pair $(\bar \Ci', l')$ were 
$\bar \Ci'$ is obtained by killing at the exponential random variable $l'$
an independent CPP $\cal M$ with intensity measure 
$\nu(dl) 1_{l< 1}$. 
In other words
$$
\lim_{\eps\to 0}
\E\left(f(S_\eps(M_{n,\eps}), l_{n,\eps}/\eps)\right) \ = \ \frac{2^n}{n !} \int_0^{\infty}dx\, 2e^{-2x}\ \E\left(f(k_{x}({\cal M}), x) \right)x^n,
$$
which ends the proof of Theorem \ref{cvc}.

\section{Genealogies Associated with Two Conditional Samplings}

\subsection{Quenched Conditional Sampling}\label{c-s-2}

We now consider the genealogy of the $n$ uniformly sampled individuals after quenched conditional sampling as defined in the previous section (see Equation \eqref{eq:def-cond-sampling}).
For a given realization of ${M}_{n,\eps}$, let $U_0,\ldots,U_{n-1}$
be $n$ independent uniform random variables on the interval $[0,l_{n,\eps}]$.
Let $(U_{(0)},\ldots,U_{(n-1)})$ be the vector $(U_0,\ldots,U_{n-1})$ reordered 
from least to greatest and define
$$
H_i^\eps := d_{M_{n,\eps}}({U_{(i-1)}, U_{(i)}}) \ \ \quad i\in\{1,\ldots,n-1\}
$$
the coalescence times of our sample.
\begin{theorem}\label{teo1}
As $\eps\to 0$, the coalescence times
$
(\eps^{-1}H_i^\eps, \ i\in\{1,\cdots,n-1\}) 
$
converge to $n-1$ i.i.d.  uniformly distributed random variables on $(0,1)$.
\end{theorem}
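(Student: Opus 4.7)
The plan is to pass to the $\eps\to 0$ limit using Theorem \ref{cvc}, and then compute the joint law of the limiting coalescence times via the Poisson structure of the Brownian CPP together with the classical correspondence between Dirichlet spacings, Gamma sums, and i.i.d.\ exponentials.

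Writing $U_j = l_{n,\eps} W_j$ with $W_0,\ldots,W_{n-1}$ i.i.d.\ uniform on $[0,1]$ and independent of the flow of bridges, Theorem \ref{cvc} together with this independence gives the joint weak convergence
\[
(S_\eps(M_{n,\eps}),\, l_{n,\eps}/\eps,\, W_0,\ldots, W_{n-1}) \ \Longrightarrow \ (k_{\cL_n}(\Mi),\, \cL_n,\, W_0,\ldots,W_{n-1}).
\]
Since the $W_j$'s are independent of both prelimit and limit, the comb-convergence criterion recalled in Section \ref{sec:flow} (convergence of comb distances at finitely many evaluation points at which the limit has no atom) can be applied at the random evaluation points $W_{(i)}\cL_n$, yielding $\eps^{-1}H_i^\eps \Longrightarrow H_i := d_{k_{\cL_n}(\Mi)}(W_{(i-1)}\cL_n, W_{(i)}\cL_n)$ for $i=1,\ldots,n-1$. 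It thus suffices to show that $(H_1,\ldots,H_{n-1})$ are i.i.d.\ uniform on $(0,1)$.

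Conditioning on $\cL_n$ and on $(W_0,\ldots,W_{n-1})$, and setting $L_i := (W_{(i)}-W_{(i-1)})\cL_n$, the restrictions of the Poisson point process $\Mi$ (with intensity $dt\otimes \tfrac{2}{x^2}\mathbbm{1}_{x<1}\,dx$ and independent of $(\cL_n,W_j)$) to the disjoint strips $(W_{(i-1)}\cL_n, W_{(i)}\cL_n)\times(0,1)$ are independent. Hence the $H_i$'s are conditionally independent given the $L_i$'s, with
\[
\P(H_i \le y \mid L_i) \ =\ \exp\bigl(-2L_i(1/y - 1)\bigr), \qquad y\in(0,1).
\]

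The key input is that the full vector $(W_{(0)}\cL_n,\, L_1,\ldots,L_{n-1},\, (1-W_{(n-1)})\cL_n)$ of spacings of the $n$ uniform points in $[0,\cL_n]$ consists of $n+1$ i.i.d.\ $\mathrm{Exp}(2)$ random variables: if $E_0,\ldots,E_n$ are i.i.d.\ $\mathrm{Exp}(2)$ then $\sum_j E_j \sim \Gamma(n+1,2)$ and the normalized vector is Dirichlet$(1,\ldots,1)$ independent of the sum, which is precisely the joint law of $\cL_n$ and the normalized spacings inside $[0,\cL_n]$. In particular $L_1,\ldots,L_{n-1}$ are i.i.d.\ $\mathrm{Exp}(2)$, and integrating the conditional law above gives
\[
\P(H_i \le y) \ =\ \E\bigl[e^{-2L_i(1/y - 1)}\bigr] \ =\ \frac{2}{2 + 2(1/y - 1)} \ =\ y, \qquad y\in(0,1).
\]
Combined with the conditional independence, this shows that $(H_1,\ldots,H_{n-1})$ are i.i.d.\ uniform on $(0,1)$. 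The only delicate step is the joint weak convergence at the random sampling points, which however reduces to Theorem \ref{cvc} by continuous mapping since the $W_j$'s are independent of everything, and the limiting PPP a.s.\ charges no set independent of it.
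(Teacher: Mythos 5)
Your proof is correct, and the identification of the limit law is exactly the paper's Lemma \ref{cor-final}: you use the Gamma/Dirichlet correspondence to see that the $n+1$ spacings of the sample points in $[0,\cL_n]$ (with $\cL_n\sim\Gamma(n+1,2)$) are i.i.d.\ $\mathrm{Exp}(2)$, and then the restriction property of the Poisson process $\Mi$ to disjoint vertical strips to get conditional independence and the explicit conditional c.d.f.\ $\exp(-2L_i(1/y-1))$. The convergence step, however, takes a shorter route than the paper's Lemma \ref{cvh}. There, the authors integrate against the density $\eps^n/l_{n,\eps}^n$ of the rescaled sample points and justify the limit--integral interchange by explicit truncations near $0$ and $\infty$ (culminating in $\delta^n\,\E(\cL_n^{-n};\,\cL_n>\delta)\to 0$). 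Your substitution $U_j=l_{n,\eps}W_j$, with $W_j$ i.i.d.\ uniform on $[0,1]$ independent of the flow, removes the density factor and with it the integrability issue: after the change of variables, the only thing that needs to converge is the pair $(S_\eps(M_{n,\eps}),\,l_{n,\eps}/\eps)$, which is precisely Theorem \ref{cvc}, and the $W_j$'s pass through by independence. The one point you should spell out is that the comb-convergence criterion of Section \ref{sec:flow} is stated at \emph{fixed} evaluation points, whereas you evaluate the comb distances at the $\eps$-dependent points $W_{(i)}l_{n,\eps}/\eps$, which co-move with the comb through $l_{n,\eps}$. To close this, apply Skorokhod representation to the joint convergence $(S_\eps(M_{n,\eps}),\,l_{n,\eps}/\eps)\to(k_{\cL_n}(\Mi),\,\cL_n)$ and observe that almost-sure convergence of combs together with almost-sure convergence of evaluation points which in the limit avoid the atoms of the comb (true here, since $W_{(i)}\cL_n$ is independent of $\Mi$ and $\Mi$ has a.s.\ infinitely many atoms in any nondegenerate interval) yields almost-sure convergence of the comb distances. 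This is straightforward but is exactly the work the paper's dominated-convergence argument makes explicit, so it deserves a sentence rather than a passing remark about ``continuous mapping.''
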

In order to show Theorem \ref{teo1}, we define  the ${\cal H}_i$'s analogously to $H_i^\eps$, but with respect 
to $k_{\cL_n}({\cal M})$, that is
$$
{\cal H}_i := d_{\cal M}({\cal U}_{(i-1)}, {\cal U}_{(i)}),
$$
where the $\{{\cal U}_{(i)}\}_{i=0}^{n-1}$ form the reordering of $n$ independent uniform random variables on the interval $[0, {\cal L}_{n}]$. We decompose the proof of Theorem \ref{teo1} into two steps. 
We first show that 
$\{H_i^\eps\}$ converges to $\{{\cal H}_i\}$ (see Lemma \ref{cvh}). We then characterize the distribution of the 
 ${\cal H}_i$'s (see Lemma \ref{cor-final}).

\begin{lemma}\label{cvh}
As $\eps\to 0$, we have the convergence in distribution of 
$
(\eps^{-1}H_i^\eps, \ i\in\{1,\ldots,n-1\})$ to $({\cal H}_i, \ i\in\{1,\ldots,n-1\}).
$
\end{lemma}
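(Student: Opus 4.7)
The plan is a direct reduction to Theorem \ref{cvc} via the scaling property of the comb metric, followed by a continuity-type argument to pass from weak convergence of combs to convergence of distances at random sampling points.

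First, I would rewrite the sample via $U_j = l_{n,\eps} V_j$, where $V_0,\ldots,V_{n-1}$ are i.i.d.~uniform on $[0,1]$ that, by construction, are independent of $(M_{n,\eps}, l_{n,\eps})$. The identity
$$
d_{S_\eps(f)}(\eps^{-1}x, \eps^{-1}y) \;=\; \eps^{-1} d_f(x,y)
$$
then allows one to rewrite
$$
\frac{H_i^\eps}{\eps} \;=\; d_{S_\eps(M_{n,\eps})}\!\left(\frac{l_{n,\eps}}{\eps} V_{(i-1)}, \frac{l_{n,\eps}}{\eps} V_{(i)}\right),
$$
so that the whole vector of rescaled coalescence times is expressed in terms of the rescaled comb $S_\eps(M_{n,\eps})$ evaluated at points built from the rescaled length $\eps^{-1} l_{n,\eps}$ and the auxiliary variables $V_j$.

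Second, I would combine Theorem \ref{cvc} with the independence of the $V_j$ from $(M_{n,\eps}, l_{n,\eps})$ to obtain the joint weak convergence
$$
\bigl(S_\eps(M_{n,\eps}),\, \eps^{-1} l_{n,\eps},\, V_0,\ldots,V_{n-1}\bigr) \;\Longrightarrow\; \bigl(k_{\mathcal{L}_n}(\mathcal{M}),\, \mathcal{L}_n,\, V_0,\ldots,V_{n-1}\bigr),
$$
where in the limit the $V_j$ remain i.i.d.~uniform on $[0,1]$ and independent of $(\mathcal{M}, \mathcal{L}_n)$. By Skorokhod's representation, I may assume this convergence holds almost surely on some common probability space; in particular the evaluation points $\eps^{-1} l_{n,\eps} V_{(j)}$ converge a.s.~to $\mathcal{L}_n V_{(j)}$.

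Third, I would transfer the convergence of combs to convergence of distances at these (now a.s.~convergent) random points. The criterion recalled in Section \ref{sec:flow} identifies weak convergence of combs with finite-dimensional convergence of $(d_{C_\eps}(x_i, x_{i+1}))$ at \emph{fixed} points $x_1 < \cdots < x_k$. To extend it to random converging points, I would use that conditional on $(\mathcal{M}, \mathcal{L}_n)$, the variables $\mathcal{L}_n V_{(j)}$ have continuous distributions, so they almost surely avoid the countable set of $x$-coordinates of atoms of $F(\mathcal{M})$; by a standard portmanteau-type argument (the events $\{d_C(x,y) < z\} = \{F(C)((x,y) \times [z,\infty)) = 0\}$ are continuity events of $F(C)$ at such random points), one concludes the a.s.~joint convergence
$$
\frac{H_i^\eps}{\eps} \;\longrightarrow\; d_{\mathcal{M}}(\mathcal{L}_n V_{(i-1)}, \mathcal{L}_n V_{(i)}) \;=\; \mathcal{H}_i,
$$
from which the claimed weak convergence follows.

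The main obstacle is the last step: the criterion of Section \ref{sec:flow} is stated for deterministic evaluation points, whereas here the points depend on $\eps$ and converge only randomly. The fix is the no-atom / continuous-distribution observation above, which makes $\mathcal{L}_n V_{(j)}$ a continuity point of the comb metric for the limit $k_{\mathcal{L}_n}(\mathcal{M})$ almost surely, so that the distances can be read off continuously from the Skorokhod representation.
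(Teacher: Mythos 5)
Your approach is genuinely different from the paper's. The paper fixes a bounded continuous test function $f$, uses the explicit density of the order statistics of $n$ i.i.d.\ uniforms on $[0,l_{n,\eps}]$ to write
$$
\E\bigl(f(\eps^{-1}H_1^\eps,\ldots,\eps^{-1}H_{n-1}^\eps)\bigr)\;=\;n!\int_{\Delta_n}g_\eps(\bx)\,d\lambda(\bx),
$$
with $g_\eps(\bx)$ an expectation of the comb distances at the \emph{deterministic} points $x_0<\cdots<x_{n-1}$ weighted by $1_{\eps x_{n-1}<l_{n,\eps}}\,\eps^n/l_{n,\eps}^n$, then applies Theorem \ref{cvc} (via the fixed-point fdd characterization of comb convergence from Section \ref{sec:flow}) for pointwise convergence $g_\eps\to g_0$ on the simplex, and finally does a dominated-convergence argument split into $x_{n-1}<\delta$, $x_{n-1}>A$, and the compact middle region. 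You instead upgrade Theorem \ref{cvc} to a.s.\ convergence via Skorokhod and then pass to the limit inside the comb metric at the (now a.s.\ convergent) random evaluation points. Both routes reach the conclusion; the paper's is computationally explicit but requires the dominated-convergence bookkeeping, while yours replaces that bookkeeping by two structural facts that you invoke but do not actually prove. First, Skorokhod representation needs the space of combs with the topology of Section \ref{sec:flow} to be separable metrizable; this holds (for instance by transporting via $(x,b)\mapsto(x,1/b)$ to a subset of the locally finite point measures on $(0,\infty)^2$ with the standard vague topology), but it is not free and should be recorded. Second, and more importantly, the final step — that a.s.\ convergence $F(S_\eps(M_{n,\eps}))\to F(k_{\cL_n}(\Mi))$ in the vague-type topology together with a.s.\ convergence $\eps^{-1}l_{n,\eps}V_{(j)}\to \cL_n V_{(j)}$ to points avoiding atom $x$-coordinates implies $d_{S_\eps(M_{n,\eps})}(\cdot,\cdot)\to d_{k_{\cL_n}(\Mi)}(\cdot,\cdot)$ — is the real crux. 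It is true, but the reason is a direct pathwise continuity argument (sandwiching with continuous plateau test functions supported on $(x-\eta,y+\eta)\times(\delta,\infty)$ that pick out atoms above level $\delta$, using that the limit comb has finitely many such atoms and none at $x$ or $y$), not a ``portmanteau-type argument'': Portmanteau concerns weak convergence, whereas after Skorokhod you are claiming pointwise convergence of real random variables. As written, that step is the one gap you would need to close to make the proof complete.
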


\begin{proof}
Let $f$ be a bounded continuous function from $\R^{n-1}$ to $\R$
and let $x_0<x_1<\cdots<x_{n-1}$.
Define
\begin{eqnarray*}
g_\eps(x_0, x_1,\ldots,x_{n-1}) & = & \E\left( f\left( d_{S_\eps(M_{n,\eps})}(x_{0},x_1), \ldots, d_{S_\eps(M_{n,\eps})}(x_{n-2}, x_{n-1}) \right) 1_{\eps x_{n-1}< l_{n,\eps}} \frac{\eps^n}{l_{n,\eps}^{n}} \right) \\
g_0(x_0, x_1,\ldots,x_{n-1}) & = &  \E\left( f\left( d_{\Mi}(x_{0},x_1), \ldots, d_{\Mi_{n}}(x_{n-2}, x_{n-1}) \right) 1_{x_{n-1}< \cL_{n}} \frac{1}{\cL_{n}^{n}} \right).
\end{eqnarray*}
It is not hard to see that 
$$
\E\left( f( \eps^{-1}H_1^\eps,\ldots, \eps^{-1}H_{n-1}^\eps  \right) ) \ = \ n !  \int_{\Delta_{n}} g_{\eps}(\bx) d \lambda(\bx) \
$$
where $\lambda$ is the Lebesgue measure on $\R^{n}$ and $\Delta_{n} = \{\bx \in\R^{n} \ : \ x_0 < x_1 <\cdots <x_{n-1}\}$. Further, an analogous relation 
holds for the limiting object. Thus 
we need to show that 
\beqn\label{tre}
\lim_{\eps\rightarrow 0} \int_{\Delta_{n}} g_{\eps}(\bx) d \lambda(\bx) \ = \  \int_{\Delta_{n}} g_{0}(\bx) d \lambda(\bx).
\eeqn
Since $\P(\cL_n=x)=0$ for every $x\in\R$, Theorem \ref{cvc} implies that $g_\eps$ converges almost surely to $g_0$ on $\{0<x_1<\cdots<x_{n-1}\}$, 
and thus, it remains to justify
the limit-integral interchange in (\ref{tre}).
For every $\delta<A\in(0,\infty)$:
\beqn
| \int_{\Delta_{n}}  \left(g_{\eps}(\bx) \ - \ g_{0}(\bx) \right) d \lambda(\bx)|
& < & \int_{\Delta_{n}\cap\{x_{n-1}>A\}} |g_\eps(\bx)| d\lambda(\bx)  +  \int_{\Delta_{n}\cap\{x_{n-1}>A\}}   |g_0(\bx)| d\lambda(\bx)\nonumber  \\
& + &  \int_{\Delta_{n}\cap\{x_{n-1}<\delta\}} |g_\eps(\bx)| d\lambda(\bx)  +  \int_{\Delta_{n}\cap\{x_{n-1}<\delta\}}   |g_0(\bx)| d\lambda(\bx) \nonumber \\
&  +     &  |\int_{\Delta_{n}\cap\{x_{n-1}\in[\delta,A]\}}   \left(g_{\eps}(\bx) \ - \ g_{0}(\bx) \right)  d \lambda(\bx)|. \label{edl}
\eeqn 
The last term goes to $0$ by combining the bounded convergence theorem and Theorem \ref{cvc}. 
Next, let $M$ be such that $||f||_\infty < M$
\beqnn
 \int_{\Delta_{n}\cap\{x_{n-1}>A\}} |g_\eps(\bx)| \ d\lambda(\bx)  
 & < & M  \int_{\Delta_{n}\cap\{x_{n-1}>A\}} \E\left(1_{\eps x_{n-1}<l_{n,\eps}} \frac{\eps^n}{l_{n,\eps}^{n}}\right)\  d\lambda(\bx) \\
 & =    &  M  \E\left( \int_{\Delta_{n}} 1_{A<x_{n-1}<l_{n,\eps}/\eps} \frac{\eps^n}{l_{n,\eps}^{n}}\  d\lambda(\bx) \right) \\
 & =  & \frac{M}{n !}  \E( 1_{l_{n,\eps/\eps}>A}\left(1-\left(\frac{\eps A}{l_{n,\eps}}\right)^{n}\right)  \ 
  <  \ \frac{M}{n !} \P(l_{n,\eps}/\eps >A)  
\eeqnn
By a similar computation, we get 
\beqnn
 \int_{\Delta_{n}\cap\{x_{n-1}>A\}} |g_0(\bx)| \ d\lambda(\bx)  
 < \frac{M}{n!} \P(\cL_n >A)  
\eeqnn
Next, we control the second line of (\ref{edl}).
\beqnn
 \int_{\Delta_{n}\cap\{x_{n-1}<\delta\}} |g_\eps(\bx)| d\lambda(\bx)  
& \leq & M  \E\left( \int_{\Delta_{n}\cap\{x_{n-1}<\delta\}} 1_{x_{n-1} < l_{n,\eps} /\eps} \frac{\eps^n}{l_{n,\eps}^{n}}\  d\lambda(\bx), \ l_{n,\eps}/\eps< \delta\right) \\ 
& + & \ M\E\left( \int_{\Delta_{n}\cap\{x_{n-1}<\delta\}} \frac{\eps^n}{l_{n,\eps}^{n}} \ d\lambda(\bx), \ l_{n,\eps}/\eps>\delta  \right) \\
& = & \frac{M}{n !} \left(  \P(l_{n,\eps}/\eps< \delta) \ + \ \delta^{n} \E\left(\frac{\eps^n}{l_{n,\eps}^{n}}, \ l_{n,\eps}/\eps >\delta\right) \right)  .
\eeqnn
 By a similar computation, we get 
 $$
 \int_{\Delta_{n}\cap\{x_{n-1}<\delta\}} |g_0(\bx)| d\lambda(\bx)  \leq  \frac{M}{n !} \left(  \P(\cL_n < \delta) \ + \ \delta^{n} \E(\frac{1}{\cL_n^{n}}, \ \cL_n >\delta) \right).  
 $$
Collecting the previous bounds, using the fact that $l_{n,\eps}/\eps\Longrightarrow \cL_n$ and the bounded convergence theorem
 yield that for every $\delta<A\in(0,\infty)$
\beqn
\limsup_{\eps\downarrow 0} \ \left| \int_{\Delta_{n}}  \left(g_{\eps}(\bx) \ - \ g_{0}(\bx) \right) d \lambda(\bx)\right| 
& \leq &
\frac{2M}{n!}\left( \P(\cL_n\geq A) \ + \ \P(\cL_n< \delta) \ + \ \delta^{n} \E(\frac{1}{\cL_n^{n}}, \ \cL_n>\delta) \right) \nonumber \\
\label{eq:last}
\eeqn
Since $\cL_n$ is distributed as a Gamma random variable 
with parameter $(n+1,2)$,
we have
\begin{eqnarray*}
\limsup_{\delta\to0} \ \delta^{n} \E(\frac{1}{\cL_n^{n}}, \ \cL_n>\delta) 
& = & \frac{2^n}{n !}\limsup_{\delta\to 0} \delta^n \int_{\delta}^{\infty} e^{-2x} dx \\
& = &  0.
\end{eqnarray*}
Taking the limit $\delta\rightarrow0$ and $A\rightarrow\infty$ in (\ref{eq:last}) yields the desired result.
\end{proof}

We now characterize the distribution of the vector $({\cal H}_i, i\in\{1,\ldots,n-1))$.

\begin{lemma}\label{cor-final}
The r.v. $({\cal H}_i, i\in\{1,\ldots,n-1\})$ are i.i.d. uniformly distributed on $[0,1]$. 
\end{lemma}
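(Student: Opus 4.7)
The plan is to decouple the distances ${\cal H}_i$ by conditioning on the interior spacings of the order statistics, and then to exploit a classical Gamma/Dirichlet identity whose rate parameter cancels the constant $2$ appearing in the intensity $\nu$ of the Brownian CPP. This cancellation is what ultimately produces uniform variables in the limit.

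First, I would augment the interior spacings $S_i := {\cal U}_{(i)} - {\cal U}_{(i-1)}$ ($1 \le i \le n-1$) with the two boundary spacings $S_0 := {\cal U}_{(0)}$ and $S_n := {\cal L}_n - {\cal U}_{(n-1)}$. Conditionally on ${\cal L}_n$, the vector $(S_0, \ldots, S_n)/{\cal L}_n$ is Dirichlet$(1, \ldots, 1)$ on the $n$-simplex, and since ${\cal L}_n \sim \text{Gamma}(n+1, 2)$, the Gamma/Dirichlet identity gives
$$(S_0, S_1, \ldots, S_n) \ \stackrel{d}{=} \ (E_0, E_1, \ldots, E_n),$$
where $E_0, \ldots, E_n$ are i.i.d.\ Exp$(2)$. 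In particular the interior spacings $S_1, \ldots, S_{n-1}$ are i.i.d.\ Exp$(2)$ and are independent of the CPP ${\cal M}$ (since ${\cal L}_n$ and the ${\cal U}_j$'s are).

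Next, I would condition on $(S_1, \ldots, S_{n-1})$ and exploit the Poisson structure of ${\cal M}$. The atoms of $F({\cal M})$ form a PPP with intensity $dt \otimes \nu(dl) 1_{l<1}$, so the restrictions of $F({\cal M})$ to the disjoint intervals $({\cal U}_{(i-1)}, {\cal U}_{(i)})$ (each of which lies inside $[0, {\cal L}_n]$, making the killing inactive there) are independent Poisson processes. Hence the ${\cal H}_i$'s are conditionally independent, each being the max of the $l$-coordinate of an independent PPP with intensity $S_i\,\nu(dl) 1_{l<1}$, and a short Poisson computation gives, for $y \in (0, 1)$,
$$\P({\cal H}_i \le y \mid S_i) \ = \ \exp\left(-S_i \int_y^1 \frac{2}{x^2} \, dx\right) \ = \ \exp\left(-2 S_i \frac{1-y}{y}\right).$$

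Averaging over the Exp$(2)$ law of the $S_i$'s then decouples everything:
$$\P({\cal H}_1 \le y_1, \ldots, {\cal H}_{n-1} \le y_{n-1}) \ = \ \prod_{i=1}^{n-1} \int_0^\infty 2 e^{-2s} e^{-2s(1-y_i)/y_i} \, ds \ = \ \prod_{i=1}^{n-1} y_i,$$
which is exactly the product CDF of $n-1$ i.i.d.\ uniforms on $(0,1)$. There is really no serious obstacle; the only step that needs care is the first, where the precise match between the shape parameter $n+1$ of ${\cal L}_n$ (the number of spacings) and the rate parameter $2$ (the constant in $\nu$) is what makes the exponentials in the second step integrate to exactly the uniform CDF $y$. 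Both of these matchings are supplied by Theorem \ref{cvc}, and this is the structural reason why the conditional sampling produces uniform coalescence times in the limit.
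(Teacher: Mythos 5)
Your proof is correct and follows essentially the same route as the paper: reduce to the spacings of the order statistics, show they are i.i.d.\ Exp$(2)$, use the Poisson independence of $\Mi$ over disjoint intervals to compute $\P({\cal H}_i\le y\mid S_i)=\exp(-2S_i(1/y-1))$, and integrate against the exponential law to get $y$. The only cosmetic difference is in the spacings step: the paper carries out the explicit change of variables on the $n!$ ordering cells and computes the (unit) Jacobian, whereas you invoke the Gamma/Dirichlet identity as a black box; this is the same fact, just cited rather than re-derived.
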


\begin{proof}
Recall that ${\cal L}_{n}$ is a Gamma r.v. with parameter $(n+1, 2)$ and that conditional on ${\cal L}_n$, the $\{{\cal U}_{(i)}\}_{i=0}^{n-1}$ form the reordering of $n$ independent uniform random variables $\{{\cal U}_{i}\}_{i=0}^{n-1}$ on the interval $[0, {\cal L}_{n}]$. So the $(n+1)$-tuple $({\cal U}_{0},\ldots,{\cal U}_{n-1}, {\cal L}_n)$ has measure on $\R^{n+1}$ 
$$
\frac{2^{n+1}}{n !}\exp(-2 L) L^n dL \ \times \ \frac{du_0}{L} 1_{0<u_0<L}\  \times \cdots \times \frac{d u_{n-1}}{L} 1_{0<u_{n-1}<L},
$$
that is, $({\cal U}_{0},\ldots,{\cal U}_{n-1}, {\cal L}_n)$ has a density  equal to $
\frac{2^{n+1}}{n !}e^{-2 L}$ on the following subset of $\R_+^{n+1}$ 
$$
A:=\{(u_0,\ldots, u_{n-1}, L): \forall i,\ 0<u_i<L\}.
$$
Now there are $n!$ pairwise disjoint $\{A_i\}_{i=1}^{n!}$ open subsets of $A$ corresponding to the $n!$ possible orderings of  $(u_0,\ldots, u_{n-1})$ and whose union coincides with $A$ $\lambda$-a.e. In addition, on each $A_i$, the map  
\beqnn
A_i & \rightarrow & \R_+^{n+1} \\
(u_0, \ldots, u_{n-1}, L)              & \mapsto & (u_{(0)}, u_{(1)}- u_{(0)},\ldots, u_{(n-1)}- u_{(n-2)}, L- u_{(n-1)})
\eeqnn
is a $C^1$-diffeomorphism with jacobian equal to 1 in absolute value. As a consequence, the $(n+1)$-tuple $({\cal U}_{(0)},{\cal U}_{(1)}-{\cal U}_{(0)},\ldots,{\cal U}_{(n-1)}-{\cal U}_{(n-2)}, {\cal U}_{(n)}- {\cal U}_{(n-1)})$, where we have set ${\cal U}_{(n)}:={\cal L}_n$ has a density on $\R_+^{n+1}$ equal to 
$$
2^{n+1} e^{-2 L} = \prod_{i=0}^{n} 2\,e^{-2(u_{(i)}-i_{(i-1)})},
$$
(denoting $u_{(-1)}=0$), which proves that these r.v. are i.i.d. exponential with parameter 2.
Now observe that for any $\sigma\in [0,1]$ and $0<a<b$,
$$
\P \left( \sup\{\Mi(x), x\in [a,b] \} \  \leq \ \sigma  \right) \ = \  
\exp\left(-(b-a) \int_{\sigma}^1 \frac{2ds}{s^2}\right)  \ = \  
\exp\left(-2(b-a) \left(\sigma^{-1}-1\right)\right) . 
$$
From the fact that $\Mi$ is a coalescent point process, for every $(n-1)$-tuple $\{\sigma_i\}_{i=1}^{n-1}$ of $[0,1]$,
\beqnn
 \P\left( \sup\{\Mi(x), x\in {[{\cal U}_{(i-1)},{\cal U}_{(i)}]} \} \  \leq \ \sigma_i ,\ \forall  i\right) & = & \E \left(\prod_{i=1}^{n-1}\exp\left(-2({\cal U}_{(i)}-{\cal U}_{(i-1)}) \left(\sigma_i^{-1}-1\right)\right)\right) \\
 &=& \prod_{i=1}^{n-1}\int_0^\infty dx\,2e^{-2x}\exp\left(-2x \left(\sigma_i^{-1}-1\right)\right)\\
 & = &  \prod_{i=1}^{n-1}\sigma_i, 
\eeqnn
where we have used that $\{{\cal U}_{(i)}-{\cal U}_{(i-1)}\}_{i=1}^{n-1}$ is a vector of i.i.d. exponential random variables with parameter $2$.
\end{proof}

\subsection{Averaged Conditional Sampling}
\label{c-s-3}

We now consider the simpler conditional sampling scheme, namely we consider a uniform sample of $n$ individuals and condition its genealogy to coalesce at a depth smaller than $\eps$. In other words, we consider $T_n < T_{n-1}<\cdots < T_2$ the successive jump times in the $n$-Kingman coalescent and we define $(T_i^\eps, \ i\in\{2,\cdots,n\})$ as the $(n-1)$-tuple $(\eps^{-1}T_i, \ i\in\{2,\cdots,n\})$ conditional on the event $\{T_2<\eps\}$. 
 \begin{theorem}\label{petit-calcul}
 As $\eps\to 0$, the ranked coalescence times 
$
(T_i^\eps, \ i\in\{2,\cdots,n\}) 
$
converge to the order statistics of $n-1$ i.i.d.  uniformly distributed random variables on $(0,1)$.
\end{theorem}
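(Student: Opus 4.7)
The plan is to perform a direct computation of the conditional joint density and identify its limit. Since the $n$-Kingman coalescent has waiting times $T_n$, $T_{n-1}-T_n$, $\ldots$, $T_2-T_3$ that are independent exponentials with respective rates $\lambda_j := j(j-1)/2$ for $j=n,\ldots,2$, the joint density of $(T_n,\ldots,T_2)$ on the open simplex $\{0<t_n<\cdots<t_2\}$ factorizes as the product of these exponential densities. I would start by writing out this density explicitly, then change variables by setting $s_j := t_j/\eps$, which introduces a Jacobian factor $\eps^{n-1}$ and replaces each $e^{-\lambda_j \cdot}$ by $e^{-\lambda_j\eps\cdot}$, while constraining $(s_n,\ldots,s_2)$ to the simplex $\Delta := \{0<s_n<\cdots<s_2<1\}$ once we condition on $\{T_2<\eps\}$.

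Next, I would analyze the asymptotics as $\eps\downarrow 0$. Two observations are key. First, the product of exponential factors $\prod_j \exp(-\lambda_j\eps(s_j-s_{j+1}))$ is bounded above by $1$ and converges uniformly to $1$ on $\Delta$, so the rescaled unconditioned density converges uniformly on $\Delta$ to the constant $\eps^{n-1}\prod_{j=2}^n\lambda_j$ (up to the overall $\eps^{n-1}$ factor). Second, by dominated convergence (using the same uniform bound) and the fact that the Lebesgue measure of $\Delta$ equals $1/(n-1)!$, the normalizing constant satisfies $\P(T_2<\eps) \sim \eps^{n-1}\prod_{j=2}^n\lambda_j/(n-1)!$ as $\eps\to0$. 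Dividing the two asymptotics, the conditional density of $(\eps^{-1}T_n,\ldots,\eps^{-1}T_2)$ given $\{T_2<\eps\}$ converges pointwise on $\Delta$ to the constant $(n-1)!$, and vanishes outside $\Delta$.

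Finally, $(n-1)!$ is precisely the density on $\Delta$ of the order statistics of $n-1$ i.i.d.\ uniform random variables on $(0,1)$. Since we have pointwise convergence of densities on a bounded domain whose total masses all equal $1$, Scheffé's lemma yields convergence in total variation, and hence in distribution, of $(T_i^\eps, i\in\{2,\ldots,n\})$ to the desired limit.

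The argument is essentially a careful calculation on a compact domain, so I do not expect any serious obstacle; the only mildly delicate point is the limit interchange in the computation of $\P(T_2<\eps)$, which is handled cleanly by the uniform boundedness of the exponential factors on $\Delta$. It is worth noting that Theorem~\ref{teo1} has the same limit distribution, so this calculation also provides an independent verification that the two conditionings (i) and (ii) described in the introduction become indistinguishable for the genealogy of the sampled individuals as $\eps\to 0$.
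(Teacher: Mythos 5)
Your proposal is correct and follows essentially the same route as the paper: write the joint density of the ranked coalescence times via the independent exponential waiting times, rescale by $\eps$, compute the asymptotics of $\P(T_2<\eps)$ by integrating, and divide. The only addition is your explicit appeal to Scheff\'e's lemma to pass from pointwise density convergence to convergence in distribution, a step the paper leaves implicit.
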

\begin{proof}
For any $i\ge 2$ set $a_i:=i(i-1)/2$ and let $0<t_n<t_{n-1}<\cdots < t_2 <1$. Then setting $T_{n+1}:=0$, we get
\begin{eqnarray*}
\P \left(\eps^{-1}T_i \in dt_i, 2\le i \le n\right) &=& \P \left(\eps^{-1}(T_i-T_{i+1}) \in dt_i-t_{i+1}, 2\le i \le n\right)\\
	&=& \prod_{i=2}^n \P \left(T_i-T_{i+1} \in \eps dt_i-\eps t_{i+1}\right)\\
	&=& \prod_{i=2}^n a_i \eps \, e^{-a_i\eps(t_i-t_{i+1})}dt_i.
\end{eqnarray*}
Integrating this last equation, it is not hard to show that as $\eps \to 0$,
$
\P \left(T_2<\eps\right) \sim \eps^{n-1}\left(\prod_{i=2}^n a_i\right) /(n-1)! 
$, a result that can also be obtained by taking the Laplace transform of $T_2$ and by applying a Tauberian theorem.
This shows that as $\eps\to 0$,
\begin{eqnarray*}
\P\left(T_i^\eps \in dt_i, 2\le i \le n\right) &=& \P \left(\eps^{-1}T_i \in dt_i, 2\le i \le n\vert T_2 <\eps\right) \\
	&\sim& \frac{\prod_{i=2}^n a_i\eps  \, e^{-a_i\eps(t_i-t_{i+1})}dt_i}{ \eps^{n-1}\prod_{i=2}^n a_i /(n-1)!}\\
	&\longrightarrow& (n-1)! \prod_{i=2}^n \,dt_i ,
\end{eqnarray*}
which terminates the proof.
\end{proof}

\paragraph{Acknowledgements.} The authors thank the \emph{Center for Interdisciplinary Research in Biology} (CIRB, Coll\`ege de France) for funding.

\end{document}